\documentclass[12pt]{amsart}
\input xy  \xyoption{all}
\usepackage[latin2]{inputenc}
\usepackage{enumerate}
\usepackage{amssymb}

\usepackage[T2A]{fontenc}
\newcommand{\Zhe}{\mbox{\usefont{T2A}{\rmdefault}{m}{n}\CYRZH}}

\oddsidemargin -.5cm
\evensidemargin -.5cm
\textwidth 17.3cm

\DeclareMathOperator\N{\mathbb N}

\renewcommand{\C}{\mathbb C}
\DeclareMathOperator\Z{\mathbb Z}

\renewcommand{\P}{\mathbf P}
\DeclareMathOperator\Hom{Hom}
\DeclareMathOperator\tp{\mathrm tp}
\DeclareMathOperator\Tp{\mathrm Tp}
\DeclareMathOperator\gr{\mathrm Gr}
\DeclareMathOperator\gl{\mathrm GL}
\DeclareMathOperator\GL{\mathrm GL}
\DeclareMathOperator\SL{\mathrm SL}

\DeclareMathOperator\codim{codim}

\DeclareMathOperator\jac{Jac}
\DeclareMathOperator\noc{\mathbf{Noc}}
\DeclareMathOperator\pl{\mathbf{Pl}}

\newcommand{\bigw}{\bigwedge\nolimits}
\def\iso{\cong}

\newtheorem{fact}{Fact}[section]

\newtheorem{theorem}[fact]{Theorem}
\newtheorem{definition}[fact]{Definition}
\newtheorem{example}[fact]{Example}
\newtheorem{rremark}[fact]{Remark}\newenvironment{remark}{\begin{rremark} \rm}{\end{rremark}}
\newtheorem{proposition}[fact]{Proposition}

\title[Nets of conics]{Equivariant and invariant theory of nets of conics with an application to Thom polynomials}

\author{M. Domokos}
\address{R\'enyi Institute of Mathematics, Hungarian Academy of Sciences, Budapest, Hungary}
\email{domokos@renyi.mta.hu}

\author{L. M. Feh\'er}
\address{Department of Analysis, Eotvos University Budapest, Hungary}
\email{lfeher@renyi.mta.hu}

\author{R. Rim\'anyi}
\address{Department of Mathematics, University of North Carolina at Chapel Hill, USA}
\email{rimanyi@email.unc.edu}

\thanks{The first named author is supported by OTKA grants NK 81203 and K 101515. The second author is supported by the OTKA grants 72537 and 81203. The third author is supported by NSA grant CON:H98230-10-1-0171.}

\begin{document}

\begin{abstract}
Two parameter families of plane conics are called nets of conics. There is a natural group action on the vector space of nets of conics, namely the product of the group reparametrizing the underlying plane, and the group reparametrizing the parameter space of the family. We calculate equivariant fundamental classes of orbit closures. Based on this calculation we develop the invariant theory of nets of conics. As an application we determine Thom polynomials of contact singularities of type (3,3). We also show how enumerative problems---in particular the intersection multiplicities of the determinant map from nets of conics to plane cubics---can be solved studying equivariant classes of orbit closures.
\end{abstract}

\maketitle

\section{Introduction}
The Thom polynomial is a multivariable polynomial associated with a singularity, governing its global behavior. Thom polynomials of certain singularities are known, mostly of those with small codimension, or with some other simple structure. The Thom polynomials of the simplest so-called $\Sigma^3$ singularities are not known. The first objective of this paper is to calculate the Thom polynomials of these simplest $\Sigma^3$ singularities. These singularities are closely related with nets (ie. two-parameter families) of (plane) conics. Recent work of the second and the third authors \cite{ts_loc} reduces the calculations of these Thom polynomials to the calculation of the equivariant cohomology classes of the corresponding classes of nets of conics.

While calculating the equivariant cohomology classes of nets of conics we realized that most of the classical results on the invariant theory of nets of conics can be reproduced using our equivariant techniques. We included the invariant theory of nets of conics and its relation with the equivariant theory for two reasons. First, this makes the paper more self-contained, and second, we find the relation between equivariant and invariant theory interesting, and possibly applicable in the future for more general representations.

Another byproduct of our calculations is a list of intersection multiplicities for the determinant map (assigning a plane cubic to a net of conics---its discriminant). Intersection multiplicities are difficult to calculate in general. We will define and compute equivariant intersection multiplicities, and show that they agree with the non-equivariant ones.

\section{Structure of the paper}

Two parameter families of plane conics are called nets of conics. There is a natural group action on the vector space of nets of conics, namely the product of the group of linear reparametrizations of the underlying plane, and the group of linear reparametrizations of the parameter space of the family.

In Section~\ref{sec:class} we develop the equivariant cohomology theory of nets of conics with respect to this action. The results of the section are summarized in Theorems \ref{thm:nets_equiv_classes} and \ref{thm:codim1tp}.

In Section \ref{sec:ts}, using results of \cite{ts_loc}, we determine Thom polynomials of contact singularities of type (3,3). The main results of the paper is Theorem \ref{main} and \ref{DE}.

In Section \ref{sec:inv-of-noc} we develop the invariant theory of nets of conics. While this theory is mostly known, we emphasize that the equivariant point of view gives a conceptional way to approach invariant theory. This section however is relatively independent of the rest of the paper, it does not use cohomology theory. It is known that the corresponding ring of invariants is generated by two algebraically independent invariants having degree  $6$ and $12$, see Section \ref{sec:inv-of-noc} for references. We give a formula  for the degree 6 invariant (already known to Salmon and Sylvester) in terms of the Plücker coordinates as a pull-back of a degree 2 invariant (\ref{eq:i2}) of the Plücker space. The expression (\ref{eq:i2}) for this  degree 2 invariant appears to be new.


In Section \ref{sec:hierarchy} we describe the hierarchy (see Figure \ref{fig:hierarchy}) of the orbits of nets of conics. This hierarchy was first obtained by different techniques in \cite{wall:nets}. We show how the cohomological data obtained in Section \ref{sec:class} through the notion of {\em incidence class} can recover the result on hierarchy. This method of reducing hierarchy to equivariant cohomology calculations may be applied in the future to other representations with small GIT quotients. The technique is particularly promising in the emerging field of Geometric Complexity Theory, see \cite{gct}.

In Section \ref{sec:mult} we show how enumerative problems---in particular the intersection multiplicities of the determinant map from nets of conics to plane cubics---can be solved studying the equivariant cohomology classes of the orbits. The main result of the section is Theorem \ref{multiplicities}. We also explain that in some cases the intersection multiplicities agree with the algebraic multiplicities. We hope that these calculations may lead to the determination of which orbit-closures of the plane cubics are Cohen-Macaulay (see Remark \ref{alg-mult}). For completeness we included the multiplicities of the induced map between the corresponding GIT quotients though these results are probably known.

\smallskip

Throughout the paper we work in the complex algebraic category, hence in particular, $\GL(U)$ means the  group of complex linear transformations of the complex vector space $U$, and $\GL_n$ denotes $\GL(\C^n)$. Cohomology will be considered with integer coefficients.

\smallskip

The authors are grateful to M. Kazarian for several useful discussions on Thom polynomials and to C. T. C. Wall for very valuable comments on nets of conics. Additionally the authors would like to thank I. Dolgachev, J. Chipalkatti, L. Oeding and P. Frenkel for useful conversations on the topics in this paper.

\section{Classification of orbits of nets and their equivariant classes}\label{sec:class}

\subsection{Orbits of nets of conics}

Let $S^2U$ denote the second symmetric power of the vector space (or representation) $U$. Consider the vector spaces $U=\C^3$ and $V=\C^3$. The main object of this paper is the $\GL(U)\times \GL(V)$ representation $\noc=\Hom(S^2U,V)$.

Through the natural isomorphism $\Hom(S^2U,V)=\Hom(V^*\negthinspace, S^2U^*)$, elements of this vector space are families of homogeneous degree 2 polynomials on $U$ parameterized by the vector space $V^*$. Lines in $S^2U^*$ determine conics in the projective plane $\P(U)$, hence elements of $\Hom(V^*\negthinspace, S^2U^*)$ are 2-parameter families ({\em nets}) of plane conics ($\noc$ stands for {\em nets of conics}). The $\GL(U)$ action reparametrizes the underlying plane $\P(U)$, and the $\GL(V)$ action reparametrizes the parameter space $V^*$\negthinspace.

There is a natural stratification $\Sigma^2\cup\Sigma^1\cup\Sigma^0$ of $\noc-\{0\}$, according to corank. Geometrically the strata correspond to conics, pencils of conics (ie. 1-parameter families of conics), and (proper) nets of conics, respectively.  The orbit structure of conics and pencils of conics is widely known. The classification of orbits of proper nets of conics is given in \cite{wall:nets} for the codimension$>1$ cases and in \cite{wall-duplessis} for the family of codimension 1 orbits. We will use their notations.

The list of codimension$>1$ orbits is given in the first 3 columns of Table \ref{symmetries} with the following conventions. Column 1 is the name of the orbit, column 2 is its codimension, and column 3 names three plane conics that span the image of $\phi\in \Hom(V^*, S^2U^*)$. Here we used the letters  $x,y,z$ for the coordinates on $U$.

\subsection{Equivariant classes} $G$-invariant subvarieties (eg. orbit closures) represent cohomology classes in the equivariant cohomology ring of a $G$-representation. We want to determine the equivariant classes $[\overline{\eta}]\in H^*_{\GL(U)\times \GL(V)}(\noc)\iso \Z[u_1, u_2, u_3,v_1,v_2,v_3]$ for the orbits $\eta\subset \noc$. Here $u_1, u_2, u_3$ and $v_1,v_2,v_3$ denote the Chern classes of the groups $\GL(U)$ and $\GL(V)$ respectively. These classes contain a lot of geometric information as we will show later.

For the codimension$>1$ orbits we use the method of {\em restriction equations} of \cite[Thm.2.4]{rrtp}, see also \cite[Sect.3]{cr}, so we need the symmetries of these orbits. More precisely, we need only a maximal torus of their stabilizer subgroups. These elementary calculations can be reduced to the level of Lie algebras. The results are summarized in the  ``symmetry'' column of Table \ref{symmetries}.

\begin{table}\caption{Codimension$>1$ orbits and symmetries}\label{symmetries}\begin{tabular}{|l|l|l|l|l|l|}

\hline
$$&cd & representative& symmetry&Poincar\'e&$\delta$\\
\hline
$\Sigma^0$&&&&&\\
\hline
$C$      & 2 & $y^2+2xz,2yz,-x^2$        & $(2\alpha,\alpha+\beta,2\beta),(2\alpha+2\beta, \alpha+3\beta, 4\alpha)$      & 1,1  &$\nu$\\
$D$      & 2 & $x^2, y^2, z^2+2xy$       & $(2\alpha,2\beta,\alpha+\beta),(4\alpha,4\beta,2\alpha+2\beta)$               & 1,2 &$\theta$\\
$D^*$    & 2 & $2xz, 2yz, z^2+2xy$       & $(2\alpha,2\beta,\alpha+\beta),(3\alpha+\beta,\alpha+3\beta, 2\alpha+2\beta)$ & 1,2 &$\theta$\\
$E$      & 3 & $x^2,y^2,z^2$             & $(\alpha,\beta,\gamma),(2\alpha,2\beta,2\gamma)$                          & 1,2,3 &$\mathsf{A}$\\
$E^*$    & 3 & $2xy, 2yz, 2zx$           & $(\alpha,\beta,\gamma),(\alpha+\beta,\beta+\gamma,\gamma+\alpha)$         & 1,2,3 &$\mathsf{A}$\\
$F$      & 3 & $x^2+y^2, 2xy, 2yz$       & $(\alpha,\alpha,\beta),(2\alpha,2\alpha,\alpha+\beta)$                        & 1,1 &$\neq$\\
$F^*$    & 3 & $x^2+y^2,xz,z^2$          & $(\alpha,\alpha,\beta),(2\alpha,\alpha+\beta,2\beta)$                         & 1,1 &$\Omega$\\
$G$      & 4 & $x^2,y^2,yz$              & $(\alpha,\beta,\gamma),(2\alpha,2\beta,\beta+\gamma)$                         & 1,1,1 &$\neq$\\
$G^*$    & 4 & $xy, xz,z^2$              & $(\alpha,\beta,\gamma),(\alpha+\beta,\alpha+\gamma,2\gamma)$                  & 1,1,1 &$\neq$\\
$H$      & 5 & $x^2,2xy,y^2+2xz$         & $(2\alpha,\alpha+\beta,2\beta),(4\alpha,3\alpha+\beta, 2\alpha+2\beta)$       & 1,1 &$\Xi$\\
$I$      & 7 & $x^2,xy,y^2$              & $(\alpha,\beta,\gamma), (2\alpha,\alpha+\beta,2\beta)$                        & 1,1,2 &0\\
$I^*$    & 7 & $xz,yz,z^2$               & $(\alpha,\beta,\gamma), (\alpha+\gamma,\beta+\gamma,2\gamma)$                 & 1,1,2 &0\\
\hline
$\Sigma^1$&&&&\\
\hline
$(1^4)$  & 4 & $x^2-xz,y^2-yz,0$     & $(\alpha,\alpha,\alpha), (2\alpha,2\alpha,\beta)$                         & 1,1     &\Zhe\\
$(21^2)$  & 5 & $xy,xz+yz,0$          & $(\alpha,\alpha,\beta),  (2\alpha,\alpha+\beta,\gamma)$                   & 1,1,1   &$\neq$\\
$(31)$   & 6 & $xz,x^2-yz,0$         & $(\alpha+\beta,2\alpha,2\beta), (\alpha+3\beta,2\alpha+2\beta,\gamma)$    & 1,1,1   &$\Xi$\\
$(22)$   & 6 & $x^2,yz,0$            & $(\alpha,\beta,\gamma),  (2\alpha,\beta+\gamma,\delta)$                   & 1,1,2   &$\neq$\\
$(4)$    & 7 & $xz+y^2,x^2,0$        & $(2\alpha,\alpha+\beta,2\beta),  (2\alpha+2\beta,4\alpha,\gamma)$         & 1,1,1   &$\Xi$\\
$K$      & 8 & $y^2,z^2,0$           & $(\beta,\alpha,\gamma),  (2\alpha,2\gamma,\delta)$                        & 1,1,1,2 &0\\
$L$      & 8 & $xy,xz,0$             & $(\alpha,\beta,\gamma),  (\alpha+\beta,\alpha+\gamma,\delta)$             & 1,1,1,2 &0\\
$M$      & 9 & $yz,y^2,0$            & $(\alpha,\beta,\gamma),  (\beta+\gamma,2\beta,\delta)$                    & 1,1,1,1 &0\\
\hline
$\Sigma^2$&&&&\\
\hline
$S$      & 10&  $xy-z^2, 0,0$        & $(2\alpha, 2\beta,\alpha+\beta), (2\alpha+2\beta,\gamma,\delta)$         & 1,1,2,2 &0\\
$PL$     & 11 & $xy,0,0$             & $(\alpha,\beta,\gamma), (\alpha+\beta,\delta,\epsilon)$                  & 1,1,1,2,2 &0\\
$DL$     & 13 & $x^2,0,0$            & $(\alpha,\beta,\gamma), (2\alpha,\delta,\epsilon)$                       & 1,1,1,2,2 &0\\
\hline
$0$      & 18 & $0, 0, 0$            & $(\alpha,\beta,\gamma), (\delta,\epsilon,\kappa)$                        & 1,1,2,2,3,3 &0\\
\hline
\end{tabular}\end{table}

To explain the notation consider the orbit $C$ represented by the net of conics $(y^2+2xz,2yz,-x^2)$. The pair of matrices
\begin{equation}\label{stab-C}
\left(\left(
    \begin{array}{ccc}
      a^2 & 0 & 0 \\
      0 & ab & 0 \\
      0 & 0 & b^2 \\
    \end{array}
  \right),
  \left(
    \begin{array}{ccc}
      a^2b^2 & 0 & 0 \\
      0 & ab^3 & 0 \\
      0 & 0 & a^4 \\
    \end{array}
  \right) \right)\in \gl(U)\times \gl(V),\qquad\qquad a,b\in \gl_1
  \end{equation}
stabilize this net of conic. Since the codimension of $C$ is 2, the dimension of the stabilizer subgroup has to be 2 as well ($\dim G=\dim V=18$), so we determined the maximal torus. This is the data that is encoded as $(2\alpha,\alpha+\beta,2\beta),(2\alpha+2\beta, \alpha+3\beta, 4\alpha)$ in Table~\ref{symmetries}.

\begin{theorem} \label{thm:nets_equiv_classes}
Consider the $\gl(U)\times \gl(V)$ representation $\noc$.  The Theorem of {Restriction Equations} \cite[Thm. 3.5]{cr} determines all the $\gl(U)\times \gl(V)$ equivariant classes of the codimension$>1$ orbit closures, eg. we have
  \begin{itemize}
     \item{} $[\overline{C}]=8(v_1-2u_1)^2$,
     \item{} $[\overline{D}]=-3u_2+3v_2-16u_1v_1+3v_1^2+17u_1^2$,
     \item{} $[\overline{D^*}]=12u_2-3v_2-20u_1v_1+6v_1^2+16u_1^2$,
     \item{} $[\overline{E}]=3u_3+3v_3-3u_1u_2+u_2v_1-6u_1v_1^2+13u_1^2v_1-2u_1v_2-8u_1^3+v_1^3$
     \item{} $[\overline{E^*}]=-24u_3+3v_3-24u_1u_2+16u_2v_1-16u_1v_1^2+20u_1^2v_1-6v_1v_2+10u_1v_2-8u_1^3+4v_1^3$
     \item{} $[\overline{F}]=2(v_1-2u_1)(6u_1^2-4u_1v_1-6u_2+3v_2)$,
     \item{} $[\overline{F^*}]=2(v_1-2u_1)(5u_1^2-8u_1v_1+9u_2-3v_2+3v_1^2)$.
    \end{itemize}
\end{theorem}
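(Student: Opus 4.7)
The plan is to apply the Theorem of Restriction Equations \cite[Thm.~3.5]{cr} (cf.~\cite[Thm.~2.4]{rrtp}) one orbit at a time. For an orbit $\eta$ of codimension $d$ the class $[\overline{\eta}]\in\Z[u_1,u_2,u_3,v_1,v_2,v_3]$ is homogeneous of degree $d$ (with $\deg u_i=\deg v_i=i$), and is uniquely pinned down by two kinds of restriction equations. For each orbit $\eta'$ with representative $\phi_{\eta'}$ and maximal stabilizer torus $T_{\eta'}$ recorded in Table~\ref{symmetries}, the \emph{principal equation} for $\eta=\eta'$ requires that the restriction $H^*_G(\mathrm{pt})\to H^*_{T_\eta}(\mathrm{pt})$ send $[\overline{\eta}]$ to the $T_\eta$-equivariant Euler class of a normal slice to $\eta$ at $\phi_\eta$; the \emph{vanishing equations} require that $[\overline{\eta}]$ restrict to $0$ on $H^*_{T_{\eta'}}(\mathrm{pt})$ for every orbit $\eta'\not\subset\overline{\eta}$.

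Concretely, for each codimension $>1$ orbit $\eta$ I would decompose $\noc=V\otimes S^2U^*$ as a $T_\eta$-representation using Table~\ref{symmetries}: its weights are of the form $v_k-u_i-u_j$, where the $u_i$ and $v_k$ denote the tabulated $T_\eta$-weights on $U$ and $V$. The tangent subspace to $\eta$ at $\phi_\eta$ is the image of the infinitesimal action $X\mapsto X\cdot\phi_\eta$ of $\gl(U)\oplus\gl(V)$, and $N_\eta$ is any $T_\eta$-stable complement; the Euler class $e(N_\eta)$ is the product of its weights, a polynomial in the torus parameters $\alpha,\beta,\ldots$. The restrictions themselves are substitutions $u_i\mapsto e_i(\text{weights on }U)$, $v_i\mapsto e_i(\text{weights on }V)$ (elementary symmetric functions) in the unknown polynomial $[\overline{\eta}]$; each principal or vanishing equation yields a linear constraint on its coefficients in the basis of degree-$d$ monomials in $u_1,u_2,u_3,v_1,v_2,v_3$, and the resulting linear system is solved by elementary means. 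For instance, the principal equation for $C$ with weights $(2\alpha,\alpha+\beta,2\beta)$ on $U$ and $(2\alpha+2\beta,\alpha+3\beta,4\alpha)$ on $V$, combined with vanishing at the orbits outside $\overline{C}$, should pin down $[\overline{C}]=8(v_1-2u_1)^2$, and the identical procedure applied to the remaining eleven codimension $>1$ orbits delivers the rest of the list (including the five orbits $G,G^*,H,I,I^*$ whose classes are not explicitly displayed in the statement).

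The main obstacle will be the Euler class computation: extracting the normal weights from the image of the infinitesimal action is delicate for orbits whose stabilizer torus has low rank, since tangent and normal spaces may share many weights and cancellations must be tracked case by case. A secondary point is to verify in each case that the equations produced from Table~\ref{symmetries} are numerous and independent enough to determine $[\overline{\eta}]$ uniquely in degree $d$; the Theorem of Restriction Equations gives this as an abstract guarantee whenever the Euler classes are nonzero, which here reduces to the observation that no normal weight vanishes at the representatives listed in Table~\ref{symmetries}.
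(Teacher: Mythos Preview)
Your plan coincides with the paper's: impose the restriction constraints encoded in Table~\ref{symmetries} and solve the resulting linear system for the coefficients of $[\overline{\eta}]$. The description of principal and vanishing equations, and of how the restriction maps are read off as substitutions $u_i\mapsto e_i(\text{$U$-weights})$, $v_i\mapsto e_i(\text{$V$-weights})$, is accurate.

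The gap is in your last paragraph. You claim that the Theorem of Restriction Equations \emph{abstractly guarantees} uniqueness once the normal Euler classes are nonzero, so that the verification ``reduces to the observation that no normal weight vanishes.'' That is not so here. The injectivity statements in \cite{rrtp,cr} require finitely many orbits, whereas $\noc$ carries the continuous family $A_\mu$, $\mu\in\P^1$, of codimension-$1$ orbits, and Table~\ref{symmetries} records no restriction data for them at all. The paper is explicit on this point: ``The proof does not follow from any general principle we are aware of, it is just an experimental fact.'' Uniqueness is established by actually writing out the constraints from Table~\ref{symmetries} for each codimension $>1$ orbit and checking, by computer, that the linear system has exactly one solution; your proof should replace the appeal to an abstract guarantee with this case-by-case computational step. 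A smaller issue: phrasing the vanishing equations as ``$\eta'\not\subset\overline{\eta}$'' presupposes the hierarchy of Section~\ref{sec:hierarchy}; the constraints available to you a priori are those coming from orbits $\eta'\neq\eta$ with $\codim\eta'\le\codim\eta$.
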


\begin{proof} The proof does not follow from any general principle we are aware of, it is just an experimental fact. The symmetry data of the table put constraints on the classes $[\overline{\eta}]$. One can write down all these constraints for each codimension$>1$ orbit $\eta$. A computer program shows that for each codimension$>1$ orbit there is only one equivariant class in $H^*\big(B(\gl(U)\times \gl(V))\big)$ satisfying the constraints.
\end{proof}

For the family of codimension 1 orbits we look at the Wall-DuPlessis classification \cite{wall-duplessis} from an equivariant point of view.

The affine plane $N_C=\{\nu_{c,g}: c,g\in \C\}$, where
 \[\nu_{c,g}=(y^2+2xz,2yz,-x^2+2g(xz-y^2)+cz^2),  \]
is normal to the orbit $C$ at the point $(y^2+2xz,2yz,-x^2)$. This plane is invariant under the action of the complex 2-torus $T_C$ of (\ref{stab-C}). The $T_C$ action on $N_C$ has weights $2\alpha-2\beta$ and $4\alpha-4\beta$, corresponding to the weight vectors $(0,0,xz-y^2)$ and $(0,0,z^2)$. Hence, the orbits of $T_C$ on $N_C$ correspond to the parabolas with $\mu=(c:g^2)\in \P^1$ constant.

According to \cite{wall-duplessis} these parabolas are exactly the intersections of the codimension 1 $\noc$-orbits with the normal slice $N_C$. We will refer to the orbit of $\nu_{c,g}$ with $\mu=(c:g^2)$ as $A_\mu$. In \cite{wall:nets} $A_{-9}$ is called $B$ and $A_{0}$ is called $B^*$. We will refer to a $\noc$-orbit representative lying in $T_C$ as a {\em $c$-$g$-form}. Recall the following {\em Incidence Theorem}.

\begin{theorem} \cite{incidence} \label{thm:inc=tp}  Consider a Lie group $G$ acting on a vector space $V$ complex linearly. For $v\in V$ let $G_v$ denote the stabilizer subgroup of $G$. Let $S$ be a subgroup of $G_v$ and $N_v$ an $S$-invariant normal slice to the orbit $Gv$ at $v$. Suppose that $\eta\subset V$ is a $G$-invariant subvariety. Then
  \[  [\eta\subset V]_S=[(\eta\cap N_v)\subset N_v]_S.  \]
\end{theorem}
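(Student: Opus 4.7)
My plan is to reduce the statement to Luna's \'etale slice theorem. First, since $v$ is $S$-fixed (as $S\subset G_v$) and we are working over $\C$, reductivity of $G_v$ allows me to choose the $S$-invariant normal slice $N_v$ to be an affine subspace $v+W$, where $W$ is a $G_v$-invariant linear complement of the tangent direction $\mathfrak{g}\cdot v\subset V$. The slice theorem then produces a $G$-equivariant isomorphism between a $G$-invariant open neighborhood $U$ of the orbit $Gv$ in $V$ and the associated bundle $G\times_{G_v}N_v'$, for a suitable $G_v$-invariant open piece $N_v'\subset N_v$.

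Because $\eta$ is $G$-invariant, under this identification $\eta\cap U$ corresponds to $G\times_{G_v}(\eta\cap N_v')$. The canonical isomorphism $H^*_G(G\times_{G_v}N_v')\iso H^*_{G_v}(N_v')$ then sends the equivariant class of this saturated subvariety to $[\eta\cap N_v'\subset N_v']_{G_v}$, and restricting $G_v$-equivariance to $S$-equivariance yields $[\eta\cap N_v\subset N_v]_S$, now viewed inside $H^*_S(N_v)=H^*(BS)$.

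To conclude, I would observe that $V$ and $N_v$ are both contractible, so $H^*_S(V)=H^*_S(N_v)=H^*(BS)$, and the $S$-equivariant inclusion $N_v\hookrightarrow V$ induces the identity on $H^*(BS)$. Consequently the two composites $H^*_G(V)\to H^*(BS)$---one factoring through $H^*_S(V)$, the other through $H^*_{G_v}(N_v)$ via the slice---both coincide with the standard restriction $H^*(BG)\to H^*(BS)$ induced by $S\subset G$, and therefore agree on the class $[\eta\subset V]_G$. Comparing the two outputs gives the desired equality. The only non-formal step, and the main obstacle, is invoking the slice theorem and checking that the class of a saturated invariant subvariety $G\times_{G_v}(\eta\cap N_v')$ matches, under the bundle identification, the class of $\eta\cap N_v'$; once this is in hand, the remainder of the argument is a direct diagram chase.
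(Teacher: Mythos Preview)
The paper does not prove this theorem at all: it is quoted from \cite{incidence} (note the ``Recall the following \emph{Incidence Theorem}'' preceding the statement), so there is no proof in the paper to compare against.

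Evaluating your argument on its own merits, there is a genuine gap. You invoke Luna's slice theorem, but that requires both that $G_v$ be reductive and that the orbit $Gv$ be closed; neither is assumed in the statement, and in the paper's applications the stabilisers are often non-reductive and the orbits non-closed (indeed most orbits in Table~\ref{symmetries} are neither). You also silently replace the given $S$-invariant slice by a $G_v$-invariant one, which again presupposes reductivity of $G_v$ and changes the hypothesis of the theorem.

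The detour through $G$- and $G_v$-equivariance is in any case unnecessary: the statement is purely about $S$-equivariant classes, and $S$ fixes $v$. Choose an $S$-invariant complement $\mathfrak m$ to $\mathfrak g_v$ in $\mathfrak g$ (this only needs $S$ reductive, which is implicit---in the paper $S$ is always a torus). Then $(\xi,n)\mapsto \exp(\xi)\cdot n$ gives an $S$-equivariant local diffeomorphism $\mathfrak m\times N_v\to V$ near $(0,v)$; since $\eta$ is $G$-invariant its preimage is exactly $\mathfrak m\times(\eta\cap N_v)$, and the $S$-equivariant class of such a product in $H^*(BS)$ is the class of the second factor. Your final diagram chase then finishes the argument. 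This avoids Luna entirely and matches the actual generality of the statement.
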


\begin{theorem} \label{thm:codim1tp}
The equivariant classes of the $A_\mu$ orbits are $4(v_1-2u_1)$ for $\mu\neq\infty$ and $2(v_1-2u_1)$ for $\mu=\infty$.
\end{theorem}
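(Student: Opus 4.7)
The plan is to apply the Incidence Theorem~\ref{thm:inc=tp} at $\nu_0=(y^2+2xz,2yz,-x^2)\in C$, taking $S=T_C$ the torus from (\ref{stab-C}) and $N_v=N_C$. This reduces the computation of $[\overline{A_\mu}]\in H^*_G(\mathrm{pt})$, where $G=\GL(U)\times\GL(V)$, to its restriction to $H^*_{T_C}(\mathrm{pt})$, which in turn equals the $T_C$-equivariant class of $\overline{A_\mu}\cap N_C$ inside $N_C$. Since $[\overline{A_\mu}]$ has cohomological degree $2$ it lies in $\Z u_1\oplus\Z v_1$; summing the weights in the symmetry row for $C$ gives $u_1\mapsto 3\alpha+3\beta$ and $v_1\mapsto 7\alpha+5\beta$ upon restriction to $T_C$, so this restriction has determinant $-6\neq 0$, is injective, and in particular sends $v_1-2u_1\mapsto\alpha-\beta$. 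Hence the $T_C$-restriction pins down the class.

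Next I would identify $\overline{A_\mu}\cap N_C$ in the coordinates $(c,g)$ on $N_C$. From the description of the $T_C$-orbits on $N_C$ preceding the theorem, the closures are the plane curve $\{c-mg^2=0\}$ when $\mu=(m:1)\in\C$ and the line $\{g=0\}$ when $\mu=\infty$. With $T_C$-weights $4\alpha-4\beta$ on the $c$-direction and $2\alpha-2\beta$ on the $g$-direction of $N_C$, the monomials $c$ and $g^2$ share weight $4\alpha-4\beta$, so $c-mg^2$ and $g$ are $T_C$-semi-invariants of weights $4\alpha-4\beta$ and $2\alpha-2\beta$ respectively.

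The final step is to invoke the standard fact that the $T$-equivariant class of the zero scheme of a semi-invariant function of weight $\chi$ equals $\chi$. This yields $[\overline{A_\mu}\cap N_C]_{T_C}=4(\alpha-\beta)$ for $\mu\neq\infty$ and $2(\alpha-\beta)$ for $\mu=\infty$, which translate via the first paragraph to $4(v_1-2u_1)$ and $2(v_1-2u_1)$.

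The main delicate point will be verifying that the scheme-theoretic intersection $\overline{A_\mu}\cap N_C$ is reduced, and in particular that at $\mu=\infty$ it is the reduced line $\{g=0\}$ rather than the nonreduced fiber $\{g^2=0\}$ of the moduli map $(c:g^2)$ (which would give $4(v_1-2u_1)$ and contradict the theorem). This should follow from the fact that the reduced orbit closure $\overline{A_\infty}$ is cut out globally on $\noc$ by an irreducible semi-invariant whose restriction to $N_C$ is (a nonzero scalar multiple of) $g$, rather than $g^2$.
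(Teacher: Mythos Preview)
Your plan matches the paper's proof essentially line for line: both invoke Theorem~\ref{thm:inc=tp} at the orbit $C$, compute the same restriction $(u_1,v_1)\mapsto(3\alpha+3\beta,\,7\alpha+5\beta)$, read off the weights $4(\alpha-\beta)$ and $2(\alpha-\beta)$ of the curves $\{c=\mu g^2\}$ and $\{g=0\}$ in $N_C$, and solve the resulting linear system in $\Z u_1\oplus\Z v_1$. Your extra care about the scheme-theoretic reducedness of $\overline{A_\infty}\cap N_C$ addresses a point the paper passes over in silence; your proposed resolution via a defining irreducible semi-invariant is correct and is realized concretely by $J_6$ in Section~\ref{sec:inv-of-noc}, where indeed $J_6|_{N_C}=24g$.
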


\begin{proof} If $\mu\not=\infty$, then the class $[(\mu g^2=c)\subset N_C]$ is equal to the weight of the $g=0$ direction. Hence we have $[(\mu g^2=c)\subset N_C]=4\alpha-4\beta$. For the curve $g=0$ we have $[(g=0)\subset N_C]=2\alpha-2\beta$. For the restriction homomorphism $r:H^*_{\GL(U)\times \GL(V)}\to H^*_{T_C}=\Z[\alpha,\beta]$ we have $r(u_1)=2\alpha+(\alpha+\beta)+2\beta=3\alpha+3\beta$ and $r(v_1)=(2\alpha+2\beta)+(\alpha+3\beta)+4\alpha=7\alpha+5\beta$ by (\ref{stab-C}).
Hence, if $[A_\mu]=Au_1+Bv_1$ ($\mu\not=\infty$), then according to Theorem \ref{thm:inc=tp}, we have $A(3\alpha+3\beta)+B(7\alpha+5\beta)=4\alpha-4\beta$. The only solution is $A=-8, B=4$. For $\mu=\infty$ the calculation is similar.
\end{proof}

\begin{remark} From the equivariant classes of (cone) varieties one can calculate their degrees, see eg. \cite[Sec. 6]{forms} and Section \ref{sec:mult}. Therefore, Theorem \ref{thm:codim1tp} implies that the degree of the hypersurfaces given by the closures of the $A_\mu$ orbits is $4(1+1+1-2(0+0+0))=12$ in general, and 6 for $\mu=\infty$. This implies that the ring of invariants $R(\noc)$ on $\noc$ is generated by a degree 6 and a degree 12 polynomial. We will give a full description of $R(\noc)$ in Section \ref{sec:inv-of-noc}.
\end{remark}

\section{Thom polynomials of contact singularities corresponding to  nets of conics} \label{sec:ts}

Consider a polynomial map  $g : (\C^n, 0) \to (\C^p, 0)$. In global singularity theory one studies its Thom polynomial
$\Tp_g\in \Z[\alpha_1,\ldots,\alpha_n,\beta_1,\ldots,\beta_p]^{S_n\times S_p}$. Here $S_n$ permutes the $\alpha_i$ variables (the so-called ``source Chern roots"), and $S_p$ permutes the $\beta_j$ variables (the so-called ``target Chern roots'').

For the precise definition and properties of $\Tp_g$ see for example \cite{ts_loc} and references therein. Let us just recall one property of Thom polynomials. For $g$ above define $I_g$ to be the ideal in $\C[x_1,\ldots,x_n]$ generated by the coordinate functions $g_1,\ldots,g_p$ of $g$. A property of $\Tp_g$ is that it only depends on the isomorphism type of the local algebra $\C[x_1,\ldots,x_n]/I_g$. In particular, if for $g_1$ and $g_2$ we have $I_{g_1}=I_{g_2}$ then $\Tp_{g_1}=\Tp_{g_2}$.

\smallskip

Consider a net of conics, that is, a point $f\in\Hom(S^2U,V)$. This can be represented by three degree-2 polynomials $f_1,f_2,f_3$ in $x,y,z$ (see examples in Table \ref{symmetries}). We can assign to $f$ the ideals $I_f=(f_1,f_2,f_3)$ and $I_{\tilde f}=(f_1,f_2,f_3)+(x,y,z)^3$ in $\C[x,y,z]$. The first choice seems more natural, however the codimension of $I_f$ in $\C[x,y,z]$ depends on $f$. On the other hand $I_{\tilde f}$ has codimension $7$ independently of $f\in \Sigma^0$. This allows us to use the localization formula of \cite[Thm 6.1]{ts_loc} (see also \cite[\S9 and 12]{ts_loc}).

\begin{theorem}\label{main}
Let $f\in \Hom(S^2U,V)$ be a $\Sigma^0$ net of conics, and let $[f]\in\Z[u_1,u_2,u_3,v_1,v_2,v_3]$ be the equivariant class of its $\GL(U)\times \GL(V)$-orbit considered in Section \ref{sec:class}. Let $\sigma_f:(\C^3,0)\to (\C^p,0)$ be a polynomial map with $I_{\tilde f}=I_{\sigma_f}$. Let $W=\{2\alpha_1,2\alpha_2,2\alpha_3,\alpha_1+\alpha_2,\alpha_1+\alpha_3,\alpha_2+\alpha_3\}$.
Then
\begin{equation}\label{eq:loc}
    \Tp_{\sigma_f}=\prod_{i=1}^3\prod_{j=1}^p (\beta_j-\alpha_i)\cdot \sum_{H\subset W, |H|=3}\frac{D_H \cdot [f]|_H}{e_H},
  \end{equation}

where
\[D_H=\prod_{w\notin H}\prod_{j=1}^p (\beta_j-w), \qquad e_H=\prod_{w_1\notin H}\prod_{w_2\in H} (w_2-w_1),
\qquad
[f]|_H=\left.[f]\right|_{v_i=\sigma_i(H), u_i=\sigma_i(\alpha_1, \alpha_2,\alpha_3)}.
\]
In the last substitution $\sigma_i$ means the $i$'th elementary symmetric polynomial.
\end{theorem}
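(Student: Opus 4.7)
The plan is to derive \eqref{eq:loc} as an Atiyah--Bott--Berline--Vergne localization of the Thom polynomial formula of \cite[Thm.~6.1]{ts_loc} applied to the contact type determined by $\Sigma^0$ nets of conics. First I would make the local-algebra invariance explicit: since $f\in\Sigma^0$ the three quadrics $f_1,f_2,f_3$ span a $3$-dimensional subspace of $S^2U^*$, so $\mathrm{im}(f)\in\gr(3,S^2U^*)$, and the local algebra $\C[x,y,z]/I_{\tilde f}\iso\C\oplus U^*\oplus(S^2U^*/\mathrm{im}(f))$ depends only on this image. Combined with $I_{\tilde f}=I_{\sigma_f}$ and the orbit invariance of Thom polynomials, this reduces the problem to evaluating a universal cohomology class attached to $[f]$.

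The next step is to invoke \cite[Thm.~6.1]{ts_loc}, which in this situation should express $\Tp_{\sigma_f}$ as an equivariant pushforward from $\gr(3,S^2U^*)$ to a point, of the product of the orbit class of $f$ with a ``source--target Euler class'' built from the ideal generators mapping into the target bundle of rank $p$. I would then apply Berline--Vergne localization on $\gr(3,S^2U^*)$ with respect to the maximal torus $T\subset\GL(U)$ whose Chern roots are $\alpha_1,\alpha_2,\alpha_3$.

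The heart of the argument is the identification of the local data at each $T$-fixed point of $\gr(3,S^2U^*)$. The weights of $T$ on $S^2U^*$ are exactly the six elements of $W$, so the fixed points are in bijection with the $\binom{6}{3}=20$ three-element subsets $H\subset W$. At such a fixed point the tautological $V^*$-subbundle is spanned by weight vectors with weights in $H$, which forces the substitution $v_i\mapsto \sigma_i(H)$ (while $u_i$ is already $\sigma_i(\alpha_1,\alpha_2,\alpha_3)$) and hence produces $[f]|_H$. The tangent space $T_H\gr(3,S^2U^*)=\Hom(\langle H\rangle,S^2U^*/\langle H\rangle)$ has equivariant Euler class $\prod_{w_1\notin H,\;w_2\in H}(w_2-w_1)=e_H$, yielding the denominator. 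The global source--target Euler class contributes the prefactor $\prod_{i,j}(\beta_j-\alpha_i)$, while at the fixed point $H$ the remaining part depends only on the complementary weights $W\setminus H$ paired against the target Chern roots, giving $D_H=\prod_{w\notin H}\prod_j(\beta_j-w)$.

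The main obstacle will be the second step: one must unwind the statement of \cite[Thm.~6.1]{ts_loc} and verify that for the present contact type the class being pushed forward really factors, both globally and pointwise, into the specific combination of $\prod_{i,j}(\beta_j-\alpha_i)$, $D_H$ and $[f]$ appearing in \eqref{eq:loc}. Once this factorization is secured, the remainder is a direct torus-localization calculation on $\gr(3,S^2U^*)$ and yields \eqref{eq:loc} upon summing the twenty fixed-point contributions.
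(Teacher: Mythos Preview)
Your proposal is correct and matches the paper's approach: the paper does not give a separate proof of this theorem at all, but simply presents it as a direct instance of the localization formula \cite[Thm.~6.1]{ts_loc} (see the sentence immediately preceding the statement, and also \cite[\S9 and \S12]{ts_loc}). What you have written is essentially an unpacking of how that cited formula specializes to the $\Sigma^0$ situation---identifying the torus-fixed points of $\gr(3,S^2U^*)$ with subsets $H\subset W$, reading off $e_H$, $D_H$, and the substitution $[f]|_H$---which is exactly the content the paper outsources to the reference.
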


\begin{remark}\label{degenerate}
Similar, and in some sense simpler formulas can be given for the non-$\Sigma^0$ nets. The corresponding polynomial maps belong to very large codimensional singularities, consequently their Thom polynomials are of very large degree, so we omit the description.
\end{remark}

Using the terminology of \cite{ts_loc} Theorem \ref{main} gives the Thom polynomial in the Chern root variables $\alpha_i$ and $\beta_j$. A more compact way to write down these polynomials is in {\em quotient variables}, using the basis of Schur polynomials. To indicate that we use quotient variables we use  the notation $\tp_{\sigma_f}$. The Schur polynomials $\Delta$ are defined by $\Delta_{(\lambda_1,\ldots,\lambda_r)}=\det(c_{\lambda_i+j-i})_{r\times r}$, for example
$\Delta_{(31)}=\det\begin{pmatrix} c_3 & c_4 \\ c_0 & c_1 \end{pmatrix}=c_3c_1-c_4$, and $c_i$ denotes the degree $i$ homogeneous part of $\frac{\prod_{j=1}^p (1+\beta_j)}{\prod_{i=1}^3 (1+\alpha_i)}$.

\smallskip

For example Theorem \ref{main}, and some calculation, gives that for $\sigma_f:\C^3\to \C^4$,
\[ \sigma_f(x,y,z)=(y^2+2xz,2yz,-x^2+2g(xz-y^2)+cz^2,z^3)\]
we have
\begin{equation} \label{eq:qwqw}
\tp_{\sigma_f}=8\Delta_{(544111)}+4\Delta_{(444211)}+16\Delta_{(844)}+20\Delta_{(6442)}+32\Delta_{(64411)}
      +120\Delta_{(6541)}+160\Delta_{(655)}+
\end{equation}
$$\phantom{mmm}+16\Delta_{(54421)}+32\Delta_{(55411)}+40\Delta_{(5542)}+80\Delta_{(5551)}+80\Delta_{(664)}+40\Delta_{(7441)}
+112\Delta_{(754)}$$
if $g\neq0$ and half of it for $g=0$.

\subsection{Thom polynomials of equidimensional maps}
Notice that for any $\sigma_f$ satisfying the condition of Theorem \ref{main}, $p$ is at least 4. It is possible to substitute however $p=3$ formally into the formula, and in some cases we can interpret the result as a Thom polynomial. This phenomenon was called the "small $p$ case" in \cite[\S 12]{ts_loc}. Applying the argument to our case we get

\begin{proposition}\label{eqidim}
Let $f$ be a $\Sigma^0$ net of conics. Interpret $f$ as a polynomial map $f:(\C^3,0)\to (\C^3,0)$ as well. Expression (\ref{eq:loc}) is the Thom polynomial of this polynomial map, if the following two conditions are satisfied.
\begin{enumerate}
  \item The only ideal $I$ containing $I_f$ with $\C[x,y,z]/I\iso \C[x,y,z]/I_{\tilde f}$ is $I_{\tilde f}$.
  \item The codimension of the polynomial map $f:(\C^3,0)\to (\C^3,0)$ in the singularity sense is $\deg[f]+9$.
\end{enumerate}
\end{proposition}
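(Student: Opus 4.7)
The plan is to specialize the small-$p$ formalism from \cite[\S 12]{ts_loc} to our setting. The right-hand side of (\ref{eq:loc}) was derived for $p\ge 4$ by equivariant localization, but it is a rational expression in $\alpha_i,\beta_j$ whose formal substitution $p=3$ yields a genuine symmetric polynomial of total degree $\deg[f]+9$; a direct inspection shows that the apparent poles coming from the denominators $e_H$ cancel in the sum. The proposition then asserts that, under the two hypotheses, this polynomial coincides with the Thom polynomial of $f$ viewed as an equidimensional germ.

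The first step is to revisit the derivation of (\ref{eq:loc}) in \cite[\S 9]{ts_loc}, where it arises as an equivariant push-forward from a resolution parametrizing pairs (2-jet, ideal). The same push-forward construction is available at $p=3$, and by construction it computes the $\GL(U)\times\GL(V)$-equivariant fundamental class, in a large-enough jet space $J^k(\C^3,\C^3)$, of a cycle $Z_f$ consisting of germs $g$ for which the truncated ideal $I_{\tilde g}$ is $\GL(U)\times\GL(V)$-equivalent to $I_{\tilde f}$ (equivalently, has local algebra isomorphic to $\C[x,y,z]/I_{\tilde f}$).

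Next I would identify $Z_f$ set-theoretically with the closure of the contact orbit $\overline{\mathcal{K}\cdot f}$. Condition (1) says that $I_{\tilde f}$ is the unique ideal above $I_f$ whose quotient is isomorphic to $\C[x,y,z]/I_{\tilde f}$; applied along the orbit, this forces $I_{\tilde g}$ to be the unique overideal of $I_g$ of the prescribed algebra type, so the correspondence $g\mapsto I_{\tilde g}$ is well-defined on the contact class and its image is exactly the $\GL(U)\times\GL(V)$-orbit of $I_{\tilde f}$ in $\noc$. This gives $Z_f=\overline{\mathcal{K}\cdot f}$ as sets. Condition (2) then upgrades this to an equality of equivariant cycles: both have codimension $\deg[f]+9$ in the jet space, matching the degree of the polynomial output of (\ref{eq:loc})$|_{p=3}$, so there is no room for spurious components or hidden multiplicities, and the two equivariant classes must agree.

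The main obstacle will be the identification $Z_f=\overline{\mathcal{K}\cdot f}$ as cycles rather than just as sets. Condition (1) is a statement about the single ideal $I_f$, and one must check that it propagates under deformations along $\overline{\mathcal{K}\cdot f}$, so that the inverse assignment $I_{\tilde g}\mapsto I_g$ is well-defined and continuous across the whole orbit closure. This amounts to an upper-semicontinuity argument on the Hilbert functions of the ideals $I_g$ and $I_{\tilde g}$ — plausible given (1) and the codimension bookkeeping of (2), but needing careful verification in the jet space.
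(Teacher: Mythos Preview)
Your approach is the one the paper intends: the paper states the proposition as a direct application of the ``small $p$'' argument of \cite[\S 12]{ts_loc} without further detail, and your sketch is a faithful reconstruction of that argument. The push-forward interpretation of (\ref{eq:loc}), the role of condition~(1) in making the correspondence fiber over $f$ a single point, and the use of condition~(2) to match the codimension with the degree of the output polynomial are all as in \cite{ts_loc}.

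The ``main obstacle'' you flag dissolves on closer inspection. Condition~(1) is intrinsically a statement about the local algebra $\C[x,y,z]/I_f$: ideals $I\supset I_f$ with quotient isomorphic to $\C[x,y,z]/I_{\tilde f}$ correspond bijectively to ideals of the ring $\C[x,y,z]/I_f$ with that quotient. Since contact-equivalent germs have isomorphic local algebras, condition~(1) therefore holds automatically for every $g$ in the open orbit $\mathcal{K}\cdot f$, not just for $f$. No semicontinuity over the closure is required: the correspondence is irreducible, so its image in the jet space is irreducible; this image contains $\mathcal{K}\cdot f$ and, by condition~(2), has the same codimension $\deg[f]+9$, hence equals $\overline{\mathcal{K}\cdot f}$; and the generic fiber over this image is a single point by the propagated condition~(1), so the push-forward is $[\overline{\mathcal{K}\cdot f}]$ with multiplicity one.
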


It is easy to see that the first condition is valid for all $f\in\Sigma^0$ (Any such $I$ should contain the ideal $(x,y,z)^3$). We give 3 examples when the second condition is also satisfied (see \cite[p.~315]{wall-duplessis}).

1. The Thom polynomial of the germ $h_{c,g}:\C^3\to \C^3$,
      \[h_{c,g}(x,y,z)=(y^2+2xz,2yz,-x^2+2g(xz-y^2)+cz^2)\]
      is
    \[ \tp_{h_{c,g}}=8\Delta_{(433)}+4\Delta_{(3331)},\]
if $g\neq 0$ and half of it for $g=0$. The singularities $h_{c,g}$ form the smallest codimensional example of a family of non-equivalent contact singularities $(\C^n,0)\to (\C^n,0)$ (\cite{mather6}). Because of the presence of this continuous modulus in the classification of singularities, the method of \cite{rrtp} calculating Thom polynomials broke down at codimension 9. (In \cite{ts_loc} a different representation of $h_{c,g}$ is used, namely $(x^2-\lambda yz, y^2-\lambda xz, z^2-\lambda xy)$. This representation is less adapted to our purposes---in general 12 different values of $\lambda$ correspond to the same orbit, and the singularities corresponding to $B$ and $B^*$ cannot be written in this form.)

2 and 3: The singularities corresponding to nets from the orbits $D$ and $E$ are denoted by $KD$ and $KE$ in \cite[p. 315]{wall-duplessis}.  Hence

\begin{theorem}\label{DE}
The Thom polynomials of maps $(\C^n,0) \to (\C^n,0)$ of type $KD$ and $KE$ are
\[  \tp_{KD}=3\Delta_{(33311)}+6\Delta_{(3332)}+14\Delta_{(443)}+16\Delta_{(4331)}+17\Delta_{(533)},\]
\[  \tp_{KE}=\Delta_{(333111)}+2\Delta_{(33321)}+4\Delta_{(3333)}+6\Delta_{(43311)}+8\Delta_{(4332)}+\]
\[\phantom{mmmmm}+14\Delta_{(4431)}+8\Delta_{(444)}+13\Delta_{(5331)}+19\Delta_{(543)}+8\Delta_{(633)}.\]
\end{theorem}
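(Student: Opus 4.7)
The plan is to deduce Theorem \ref{DE} as a direct application of Proposition \ref{eqidim} to the representative nets of the orbits $D=(x^2,y^2,z^2+2xy)$ and $E=(x^2,y^2,z^2)$ listed in Table \ref{symmetries}. Before invoking the proposition, I need to check its two hypotheses. The first (uniqueness of the ideal $I$ containing $I_f$ with $\C[x,y,z]/I\iso \C[x,y,z]/I_{\tilde f}$) is observed in the text to hold for every $\Sigma^0$ net, since any such $I$ automatically contains $(x,y,z)^3$ by a dimension count. The second hypothesis, that the singularity-theoretic codimension of the germ equals $\deg[f]+9$, is the genuine geometric input: for the types $KD$ and $KE$ this is recorded in \cite[p.~315]{wall-duplessis}.

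With both hypotheses in place, the recipe is mechanical. Plug the equivariant classes $[\overline{D}]$ and $[\overline{E}]$ from Theorem \ref{thm:nets_equiv_classes} into formula (\ref{eq:loc}) with $p=3$. Concretely, for each of the $\binom{6}{3}=20$ three-element subsets $H$ of $W=\{2\alpha_i,\alpha_i+\alpha_j\}$, compute the elementary symmetric polynomials $\sigma_i(H)$, substitute them for the $v$-variables (and $\sigma_i(\alpha_1,\alpha_2,\alpha_3)$ for the $u$-variables) in $[\overline{D}]$ or $[\overline{E}]$, divide by $e_H$, multiply by $D_H$, and sum. The overall factor $\prod_{i,j=1}^3(\beta_j-\alpha_i)$ clears the denominators, leaving a polynomial symmetric in $\{\alpha_i\}$ and in $\{\beta_j\}$ of total degree $11$ for $KD$ and $12$ for $KE$.

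The final step is the change of basis to Schur polynomials in the quotient variables. Using the generating series $\sum_i c_i=\prod_{j=1}^3(1+\beta_j)/\prod_{i=1}^3(1+\alpha_i)$ and the Jacobi--Trudi formula $\Delta_\lambda=\det(c_{\lambda_i+j-i})$, one expands the bi-symmetric polynomial obtained above in the $\Delta_\lambda$ basis. The principal obstacle here is not conceptual but computational: summing twenty nontrivial rational expressions in six variables and then diagonalizing the result against the Schur basis is a job for computer algebra. A useful sanity check is that every partition appearing in the claimed expansions has size matching the expected Thom polynomial degree ($11$ for $KD$ and $12$ for $KE$), which is indeed the case in the stated formulas.
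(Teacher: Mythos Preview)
Your proposal is correct and follows exactly the approach the paper takes: verify the two hypotheses of Proposition~\ref{eqidim} (the first for all $\Sigma^0$ nets as noted in the text, the second via \cite[p.~315]{wall-duplessis}), then feed the equivariant classes $[\overline{D}]$ and $[\overline{E}]$ from Theorem~\ref{thm:nets_equiv_classes} into formula~(\ref{eq:loc}) with $p=3$ and re-express the result in the Schur basis. The paper itself does not spell out the computation any further than you do---it simply writes ``Hence'' and states the formulas---so your description is, if anything, more detailed than the original.
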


\begin{remark} A polynomial map is called equidimensional, if the dimension of its source is the same as the dimension of its target. Thom polynomials of
equidimensional polynomial maps were calculated in \cite{rrtp} up to degree 8. The results above on the Thom polynomials of $h_{c,g}$, $KD$, and $KE$ are essentially the first examples beyond degree 8.
\end{remark}

\subsection{Duality and the Kazarian theory}
Recently M. Kazarian developed a method (\cite{kaza:noas}) to calculate Thom polynomials. His approach is dual to the method of \cite{ts_loc} in the sense that it uses the quotient algebra as opposed to the ideal of the singularity. For a singularity corresponding to a $\Sigma^0$ net $f$ in the above sense the quotient algebra is $Q_f=J^2(U,V)/(f_1,f_2,f_3)$. This is a graded algebra of graded degree $(3,3)$, so the multiplication is determined by a linear map $m\in \Hom(S^2U^*,\ker f)$. The map $m$ is the {\em apolar} of $f$ in the terminology of \cite{wall:nets}. The input of the Kazarian method is the equivariant class of $m$. The apolar of a net lives in the dual representation, however, by fixing a basis of $U$ we can identify these spaces. The identification depends on the basis but a basis change does not change the orbit, so we get a well defined duality on the $\Sigma^0$ orbits. Wall denotes this duality by $*$. It is also true (and easy to see) that the equivariant classes of the orbits are the same up to sign in the dual representation, however in general the dual orbit has different equivariant class, see e.g. $[D]\neq [D^*]$ and consequently the Thom polynomials of the corresponding singularities are also different. Using the Kazarian theory we were able to double check our results.

\section{Invariant theory of nets of conics}\label{sec:inv-of-noc}
Most of the results in this section are known (see \cite{wall:nets}, \cite{wall:nocinv}, \cite{wall-duplessis} \cite{vinberg} and \cite{artamkin-nurmiev}) ), but we also would like to show how equivariant theory leads to these results with the hope that it can be applied in a more general context.

\subsection{The ring of (semi) invariants}
Suppose that the Lie group $G$ acts on the vector space $W$ and $\hat{G}$ is the character group of $G$. We say that $f\in \C[W]$ is a {\em relative invariant corresponding to the character} $\chi\in\hat{G}$ (i.e. $f\in R_\chi(W)$) if for all $v\in W$ and $g\in G$
\begin{equation}\label{semi} f(gv)=\chi(g)f(v). \end{equation}
The ring of {\em semi-invariants} is $R(W):=\bigoplus_{\chi\in \hat{G}} R_\chi(W)$.

Note that an element of $R(W)$ is not necessarily a relative invariant for any $\chi\in \hat{G}$. Semi-invariants of $G$ are always invariants of the commutator subgroup $G'$, but in general the ring of invariants of $G'$ can be bigger.  In the following two examples, however, they coincide.

\begin{enumerate}[(a)]
\item The $\gl(U)\times\gl(V)$-action on $\noc$: Any character of $\gl(U)\times\gl(V)$ is of the form
$\chi_{a,b}(g,h):=\det^a(g)\det^b(h)$. If $f\in R(\noc)$ is homogeneous of degree $l=3d$, then
\[ f\big((\lambda I,\mu I)v\big)=f(\lambda^{-2}\mu v)=\lambda^{-2l}\mu^l f(v)={\det}^{-2d}(\lambda I){\det}^d(\mu I)f(v), \]
therefore $f$ is a relative invariant corresponding to the character $\chi_{-2d,d}$. (In GIT language we see that there is a unique linearization for GIT-quotient.) In other words
\[ R(\noc)=\bigoplus_{d\in \N} R_{\chi_{-2d,d}}(\noc). \]
In this case all characters are determinants, so $R(\noc)$ coincides with the ring of absolute invariant polynomials for the $\SL_3\times\SL_3$-action on $\noc$.

\item As we discussed before, the maximal torus of the stabilizer of the net $(y^2+2xz,2yz,-x^2)$ is $\gl_1\times\gl_1$. It acts on the normal space $N_C$ to its orbit with weights $4\alpha-4\beta$ and  $2\alpha-2\beta$. With respect to this action we have
\[ R(N_C)=\bigoplus_{d\in \N} R_{\chi_{2d,-2d}}(N_C)=\C[N_C]=\C[c,g], \]
where $c\in R_{\chi_{4,-4}}(N_C)$ and $g\in R_{\chi_{2,-2}}(N_C)$. Consider the restriction map $i^*:R(\noc)\to R(N_C)$. From the first line of Table \ref{symmetries} (equivalently, from (\ref{stab-C})) one sees that
$i^*$ maps $R_{\chi_{-2d,d}}(\noc)$ into $R_{\chi_{d,-d}}(N_C)$.
\end{enumerate}

We claim that $i^*:R(\noc)\to R(N_C)$ is injective. Indeed, according to the splitting above it is enough to show that it is injective on  $R_{\chi_{-2d,d}}(\noc)$ for any given $d$, which follows from the fact that  the values of a relative invariant $f$ on $N_C$ determine its values on $(\gl(U)\times\gl(V)) \cdot N_C$ via (\ref{semi}) and $(\gl(U)\times\gl(V)) \cdot N_C$ is dense in $\noc$.  Now we prove that the homomorphism $i^*$ is also surjective, by finding relative invariants of $\noc$ mapped to $c$ and $g$.

\subsection{The determinant map} \label{sec:determinant} Composing nets with the determinant $S^2U^{*}\to \C$ (well defined upto a scalar factor) we get a degree 3 polynomial map $\delta:\noc\to S^3V$. This map is $\gl(U)\times\gl(V)$-equivariant if we let  $\gl(U)$ act on $S^3V$ as scalars by the second tensor power of the determinant representation. Consequently we have a homomorphism $\delta^*:R(S^3V)\to R(\noc)$. Let us review now the invariant theory of the plane cubics $S^3\C^3$ which was one of the first achievements of the early invariant theory.

\begin{theorem}\cite{Aronhold} \label{thm:cubics-invs}  There are invariants $a,b$ of $S^3\C^3$ of degree $4$ and $6$, respectively such that $R(S^3\C^3)\iso\C[a,b]$ and every smooth plane cubic $\gamma$ can be transformed (using the $\GL_3$-action) into the Weierstrass-form:
\[y^2z+x^3+a(\gamma)xz^2+b(\gamma)z^3.\]
\end{theorem}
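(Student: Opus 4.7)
The plan is to establish Aronhold's theorem in three stages: (i) construct explicit semi-invariants $a$ and $b$ of degrees $4$ and $6$, (ii) prove the Weierstrass normal form for smooth cubics, and (iii) verify that $\C[a,b]$ exhausts $R(S^3\C^3)$. For stage (i) I would use the classical symbolic construction: take $a$ to be the Aronhold $S$-invariant, a degree-$4$ polynomial in the ten coefficients of the ternary cubic, and $b$ to be the $T$-invariant of degree $6$, describable for instance as a Hankel-type determinant built from the Hessian of the cubic. A direct check on the scalar action $g=\lambda I\in \GL_3$ shows that $a,b$ are semi-invariants for the characters $\det^{-4}$ and $\det^{-6}$ respectively, or equivalently absolute $\SL_3$-invariants.

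For stage (ii), given a smooth cubic $\gamma$, its Hessian is a nondegenerate cubic as well, so by Bezout they meet in nine distinct points, the flexes of $\gamma$. Choose one flex and act by $\GL_3$ to move it to $(0{:}1{:}0)$ with its tangent line becoming $\{z=0\}$. The flex and tangent conditions suppress the $y^3$, $xy^2$, and $x^2y$ terms, and normalizing the remaining pure $x^3$ and $y^2z$ coefficients to $1$ brings $\gamma$ to the form $x^3+y^2z+c_{201}x^2z+c_{111}xyz+c_{012}yz^2+c_{102}xz^2+c_{003}z^3$. Completing the square in $y$ eliminates the two terms linear in $y$, and then completing the cube in $x$ eliminates the $x^2z$ term, yielding the shape $x^3+y^2z+a'xz^2+b'z^3$ for coefficients $a',b'$ that are explicit polynomials in the original data. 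Evaluating the invariants from stage (i) on the resulting Weierstrass family pins down the normalizing constants, so up to rescaling we may identify the coefficients of the normal form with $a(\gamma)$ and $b(\gamma)$. Algebraic independence of $a,b$ follows immediately, since the family $\{y^2z+x^3+sxz^2+tz^3 : s,t\in \C\}$ realizes every pair of values in $\C^2$.

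For stage (iii) the equality $\C[a,b]=R(S^3\C^3)$ is most efficiently obtained by comparing Poincar\'e series: Molien's formula for the $\SL_3$-action on $S^3\C^3$ gives $H(t)=1/\bigl((1-t^4)(1-t^6)\bigr)$, which is exactly the Hilbert series of the polynomial ring $\C[a,b]$; a graded inclusion of rings with identical Hilbert series is an equality. The main obstacle is this final step. Constructing $a,b$ and performing the Weierstrass reduction are classical, but ruling out further independent generators in higher degrees requires the Molien computation or a conceptual replacement. A possible alternative uses that the GIT quotient $S^3\C^3/\!/\SL_3$ is the affine $j$-line --- one-dimensional, smooth, with finite generic stabilizer --- so that a Shephard--Todd-type criterion forces the invariant ring to be polynomial in two generators; but executing this rigorously is of comparable difficulty to the direct Molien calculation.
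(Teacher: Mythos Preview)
The paper does not actually prove this theorem: it is stated with the citation \cite{Aronhold} as a classical result and then used as input. There is no argument in the paper to compare against.

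Your outline is the standard classical route and is essentially sound. Two minor points. In stage~(ii) the Hessian of a smooth cubic need not be ``nondegenerate'' (for the Fermat cubic $x^3+y^3+z^3$ the Hessian is $216\,xyz$), but all you actually use is the existence of one flex, which Bezout guarantees regardless. In stage~(iii), ``Molien's formula'' is literally for finite groups; the Hilbert series of $\C[S^3\C^3]^{\SL_3}$ is obtained via Weyl integration, and as you acknowledge this is where the real work lies.

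Your alternative for stage~(iii) is in fact close to what the paper does for the analogous statement about $\noc$ (Theorem~\ref{thm:noc-inv-ring}). The paper remarks immediately after the present theorem that the Weierstrass family is a normal slice to the orbit of the cuspidal cubic $y^2z+x^3$. For $\noc$ the authors prove that restriction of semi-invariants to such a slice is injective (because the orbit of the slice is dense) and then exhibit explicit invariants restricting to the slice coordinates; running the identical argument for cubics would replace your Hilbert-series computation by the verification that $a$ and $b$ restrict to the two slice coordinates, together with the injectivity of restriction. That is arguably lighter than the full Weyl-integration computation, and it is the method the paper itself advocates.
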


\begin{remark} The Weierstrass-form is analogous to the $c$-$g$-form of nets of conics from Section~\ref{sec:class}. The subset $\{y^2z+x^3+axz^2+bz^3:a,b\in \C\}$ is a normal slice to the orbit of the cuspidal cubic $y^2z+x^3$. This observation was used in \cite{komuves:thesis} to calculate the equivariant classes for plane cubics. The choice of these orbits is not accidental. Their closure is the nullcone, so the normal slice intersects all invariant hypersurfaces.\end{remark}

Consider now the determinant of the $c$-$g$-form $\nu_{c,g}=(y^2+2xz,2yz,-x^2+2g(xz-y^2)+cz^2)$. Considering the three $3\times 3$ matrices of the three components of $\nu_{c,g}$ we obtain
\begin{align*}
\delta(\nu_{c,g}) & =  \det\left(
   (-x) \left(
   \begin{array}{ccc}
      0 & 0 & 1 \\
      0 & 1 & 0 \\
      1 & 0 & 0 \\
    \end{array} \right) +
    (-y) \left(
    \begin{array}{ccc}
      0 & 0 & 0 \\
      0 & 0 & 1 \\
      0 & 1 & 0 \\
    \end{array}
  \right) +
    z \left(
    \begin{array}{ccc}
      -1 & 0 & g \\
      0 & -2g & 0 \\
      g & 0 & c \\
    \end{array}
  \right)\right) \\
    & =  y^2z+x^3+(c-3g^2)xz^2+2g(c+g^2)z^3.
  \end{align*}
Here we parameterized $\C^3$ with $-x,-y,z$ to obtain our result, the Weierstrass form, without sign changes. Therefore $i^*\delta^*a=c-3g^2$ and $i^*\delta^*b=2g(c+g^2)$. We denote the degree 12 invariant $-48\delta^*a$ by $J_{12}$.

To complete the calculation of $R(\noc)$ we need to find the degree 6 invariant which restricts to $g$ on the normal slice $N_C$. This is a straightforward job with a computer, but using a geometric idea it can be done by hand.

\subsection{The Pl\"ucker map and the invariant $I_2$} We have a degree 3 $\gl(U)\times\gl(V)$-equivariant map
\[\psi:\noc=\Hom(S^2U,V)\ \stackrel{\bigw^3}\to\ \Hom(\bigw^3S^2U,\bigw^3V)\iso \bigw^3 S^2U^*\otimes \bigw^3V.\]
We will call $\pl:=\bigw^3 S^2U^*\otimes \bigw^3V$ the {\em Pl\"ucker space}.

Picking a basis for  $V^*$, to each net we can associate a triple $M=(M_1,M_2,M_3)$ of conics, and $\psi$ sends $M$ to $M_1\wedge M_2\wedge M_3\in \bigwedge^3 S^2U^*$. The image of $\psi$ is the cone of the Grassmannian $\gr_3(S^2U^*)$. Our next goal is to show that the representation $\pl$ has a degree 2 invariant $I_2$ which pulls back to a degree 6 invariant $J_6$ of $\noc$.

Let $\xi,\eta,\nu$ be a basis in $U^*$ and $x,y,z$ the corresponding dual basis in $U$. Write $E_{ij}$ for the element of $\mathfrak{sl}(U)$ mapping the $i$th basis vector of $U$  to the $j$th basis vector  and annihilating the other two basis vectors.

Consider the basis $e_1:=\xi^2$, $e_2:=\xi\eta$, $e_3:=\xi\nu$, $e_4:=\eta^2$,  $e_5:=\eta\nu$, $e_6:=\nu^2$ in $S^2U^*$, and set $e_{ijk}:=e_i\wedge e_2\wedge e_3\in \bigwedge^3 S^2(U^*)$.
Under the natural identification of  $S^2U$ with the dual of $S^2(U^\star)$, the basis in $S^2(U)$ dual to $e_1,\dots,e_6$ is $t_1:=x^2$, $t_2:=2 xy$, $t_3:=2 xz$, $t_4:=y^2$, $t_5:= 2 yz$, $t_6:=z^2$.
Then  $t_{ijk}:=t_i\wedge t_j\wedge t_k\in\bigwedge^3S^2(U)$ is the basis dual to $e_{ijk}$. In particular,
$t_{ijk}(\pi(M))$ is the $3\times 3$ minor corresponding to the $(i,j,k)$ columns of the $3\times 6$ matrix
of the net $M$ viewed as a linear map from $V^*$ to $S^2U^*$, with respect to the chosen bases.

As an $\SL(U)$-representation $\pl\iso S^3(U)\oplus S^3(U^*)$ (this follows for example from the calculations below or by calculating the weights of $\pl$ (see Figure \ref{fig:weights}).).

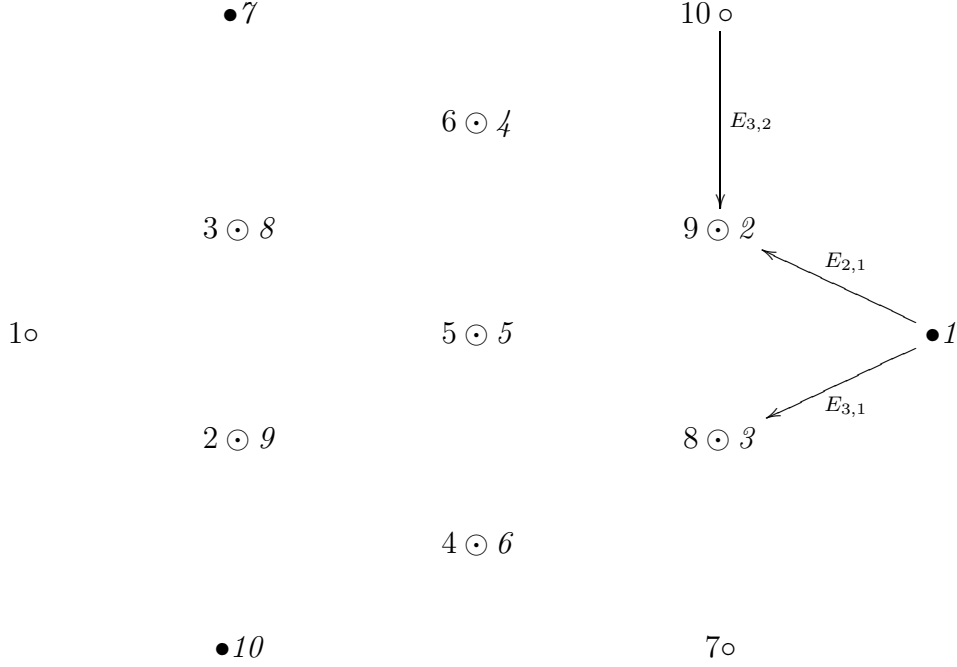
\begin{figure}
\xymatrix{ &&\bullet\mathit7 &&              &&10\circ\phantom{\mathit2}\ar[dd]^{E _{3,2}}       && \\
           &&                &&6\odot\mathit4 &&               && \\
           &&3\odot\mathit8  &&              &&9\odot\mathit2 && \\
   1 \circ &&                &&5\odot\mathit5 &&      &&\bullet\mathit1 \ar[ull]_{E _{2,1}} \ar[dll]^{E _{3,1}}\\
           &&2\odot\mathit9  &&              &&8\odot\mathit3 && \\
           &&      &&4\odot\mathit6 &&      && \\
           &&\bullet\mathit{10} &&      &&7\circ &&  }
  \caption{The 20 weights of $\pl$}\label{fig:weights}
\end{figure}

Denoting by  $W$ the $10$-dimensional $\SL(U)$-module $S^3(U)$ we have $S^2 \pl^*\cong S^2W\oplus S^2W^*\oplus W\otimes W^*$. The first two summands do not contain the trivial $\SL(U)$-module (say by the theorem of Aronhold on the invariants of ternary cubic forms), and the third summand contains one copy of the trivial representation by Schur's Lemma, spanned by $w_1w_1^*+\cdots+w_{10}w_{10}^*$, where $w_1,\dots,w_{10}$ is a basis of $W$ and $w_1^*,\dots,w_{10}^*$ is the corresponding dual basis in $W^*$. Table \ref{un-normalized} contains explicit elements $w_1,\dots,w_{10}\in \bigwedge^3 S^2(U)$ spanning an $\mathfrak{sl}(U)$-summand isomorphic to $W$. Each weight space in $\bigwedge^3 S^2U$ is $1$-dimensional. Therefore $x^2\wedge xy\wedge xz$ is a highest weight vector generating an $\mathfrak{sl}(U)$-module isomorphic to $W$. The $w_2,\dots,w_{10}$ are obtained by applying successively the operators $E_{21}, E_{31}\in\mathfrak{sl}(U)$, as indicated in Table \ref{un-normalized}.

Recall that the action of $E_{i,j}$ is the sum of the replacements of each occurrence of the $j$th basis vector to the $i$th one, e.g.
      \[E_{3,1}(x^2\wedge xy\wedge xz)=2xz\wedge xy\wedge xz+x^2\wedge yz\wedge xz+x^2\wedge xy\wedge z^2=
                                               x^2\wedge xy\wedge z^2-x^2\wedge xz\wedge yz.\]

Up to non-zero scalars $w_{10}^*$ in Table \ref{un-normalized} is the only weight vector whose weight is opposite to the weight of the lowest weight element $w_{10}$ in $W$, therefore it must be a highest weight vector generating a submodule isomorphic to $W^*$ in $\bigwedge^3 S^2U^*$. Applying successively appropriate elements of $\mathfrak{sl}(U)$ to $w_{10}^*$ one computes $w^*_2,\dots,w^*_{10}\in \bigwedge^3 S^2U^*$. For example, by the left column of the table we have $w_{10}=E_{31}w_6$, hence $w_6^*=-E_{31}w^*_{10}$.
In addition to the information in the left column of the table we need also the relations
   \[  w_8=\frac 13 E_{32}w_7, \quad w_9=\frac 12 E_{32} w_8, \quad\mbox{and} \quad  w_{10}=E_{32} w_9.  \]
With the $w_i,w^*_i$ given in Table \ref{un-normalized} (for example, $w_2=-x^2 \wedge xz\wedge  y^2+ x^2\wedge xy\wedge yz=-\frac 12t_{134}+\frac 14 t_{125}$)
 we have the equality
\begin{equation} \label{eq:i2}
\begin{split}
-8I_2:=8\sum_{i=1}^{10 }w_iw_i^*&=t_{235}^2-8t_{146}^2 \\
       &-8t_{134}t_{346}+8t_{126}t_{246}+8t_{145}t_{156} \\
       &+6t_{123}t_{456}-6t_{136}t_{245}+6t_{124}t_{356}\\
       &-4t_{125}t_{256}+4t_{135}t_{345}-4t_{234}t_{236} \\
       &+2t_{134}t_{256}-2t_{125}t_{346}+2t_{135}t_{246}-2t_{126}t_{345}\\
       &+2t_{145}t_{236}+2t_{156}t_{234}-2t_{146}t_{235}.
\end{split}
\end{equation}

\begin{remark}  (i) Set $u_{ijk}:=t_{ijk}\circ\psi$, so $u_{ijk}$ is an element of the coordinate ring of $\noc$. Recall a classical result of Sylvester (see page 365 in \cite{salmon} or \cite{gizatullin}), asserting (after a change to our coordinate system) that
\begin{equation*}
\begin{split}
-8\theta:=&\, u_{235}^2-8u_{146}^2 \\
         +&\, 4u_{146}u_{235}+4u_{135}u_{345}-4u_{125}u_{256}-4u_{234}u_{236}\\
         +&\, 8u_{145}u_{156}-8u_{134}u_{346}+8u_{126}u_{246} \\
         +&\, 8u_{123}u_{456}-8u_{136}u_{245}+8u_{124}u_{356}
\end{split}
\end{equation*}
is an $\SL(U)$-invariant on $\noc$. We thank I. Dolgachev for bringing this reference to our attention.
One can easily verify using the straightening algorithm (cf. Section 13.2.2 in \cite{procesi}) that
$J_6:=I_2\circ\psi$ coincides with Salmon's $\theta$. Notice that one cannot reconstruct $I_2$ from $\theta$ since $\psi^*$ has a kernel, generated by the {\em Plücker relations}. To the best of our knowledge formula (\ref{eq:i2}) for $I_2$ is new.

(ii) It is proved in \cite{vinberg} that the ring of $\SL(\mathbb{C}^3)\times \SL(\mathbb{C}^3)\times \SL(\mathbb{C}^3)$-invariants in
$\mathbb{C}^3\otimes\mathbb{C}^3\otimes\mathbb{C}^3$ is generated by three algebraically independent elements of degree $6$, $9$, and $12$.
An alternative way to construct  the invariant $J_6$ (not as a pullback from $\pl$) is to restrict the degree $6$ generator to the subspace of $\C^3\otimes\C^3\otimes \C^3$ corresponding  to symmetric matrix triples. Explicit formulae for the three generators can be found in \cite{thibon} or \cite{matyi}.
\end{remark}

One can calculate that
\[\psi(\nu_{c,g})=(y^2+2xz)\wedge2yz\wedge(x^2+2g(xz-y^2)+cz^2)=12ge_{345}+2c(e_{456}+2e_{356})-2e_{145}-4e_{135},  \]
and hence, for $J_6=I_2 \circ \psi$ we have
$J_6(\nu_{c,g})=I_2(\psi(\nu_{c,g}))=24g$. This concludes our proof of the following known theorem (see \cite{artamkin-nurmiev}, \cite{vinberg}, where the ring of invariants of $\noc$ is identified with the ring of invariants of a finite complex pseudo-reflection group):

\begin{theorem}\label{thm:noc-inv-ring}  The ring of invariants of $\noc$ is freely generated by $J_6$ and $J_{12}$. \end{theorem}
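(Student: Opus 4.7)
The plan is to leverage the injective restriction homomorphism $i^*\colon R(\noc)\to R(N_C)$ constructed earlier, together with the explicit evaluations $i^*(J_6)=24g$ and $i^*(J_{12})=-48(c-3g^2)$ which were already computed in the two preceding subsections. The strategy is to show that the subring $\C[J_6,J_{12}]\subseteq R(\noc)$ already exhausts the image of $i^*$, and then deduce the theorem from injectivity.

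First I would verify that $R(N_C)$ is the entire polynomial ring $\C[c,g]$. The torus $T_C$ acts diagonally on the two coordinate directions with weights $4(\alpha-\beta)$ and $2(\alpha-\beta)$, so every monomial $c^ig^j$ is a weight vector, hence every polynomial is a sum of relative invariants; combining this with closure of $R(N_C)$ under multiplication yields $R(N_C)=\C[c,g]$. Next I would observe that $\C[\,i^*(J_6),\,i^*(J_{12})\,]=\C[c,g]$ as $\C$-algebras: from $i^*(J_6)=24g$ we recover $g$, and from $i^*(J_{12})+3\bigl(i^*(J_6)/24\bigr)^2\cdot 48=-48c$ we recover $c$. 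This gives the chain
\[
i^*\bigl(\C[J_6,J_{12}]\bigr)\;\subseteq\;i^*\bigl(R(\noc)\bigr)\;\subseteq\;R(N_C)\;=\;i^*\bigl(\C[J_6,J_{12}]\bigr),
\]
forcing equality throughout. By injectivity of $i^*$ this collapses to $R(\noc)=\C[J_6,J_{12}]$.

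For the free-generation (algebraic independence) part, any relation $P(J_6,J_{12})=0$ in $R(\noc)$ would map under $i^*$ to $P(24g,-48c+144g^2)=0$ in $\C[c,g]$; since these two images generate the whole polynomial ring $\C[c,g]$ of transcendence degree two, they are algebraically independent, so $P\equiv 0$ and the same holds upstairs. I expect no serious technical obstacle here: all the heavy lifting (the injectivity of $i^*$, the classical determinant-map yielding $J_{12}$, and in particular the new construction of $J_6$ as a pull-back of the Pl\"ucker-space invariant $I_2$ via $\psi$) is already done in the preceding subsections. The only mildly delicate point is confirming that $i^*(J_{12})$ has the claimed form $-48(c-3g^2)$ and $i^*(J_6)=24g$, which is a direct substitution into the $c$-$g$-form $\nu_{c,g}$ already carried out above.
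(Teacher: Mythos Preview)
Your proposal is correct and follows essentially the same route as the paper: the paper sets up the injective restriction $i^*\colon R(\noc)\to R(N_C)=\C[c,g]$, then constructs $J_{12}$ via the determinant map and $J_6$ via the Pl\"ucker map, computes their restrictions to the $c$-$g$ slice, and announces that this ``concludes the proof'' of the theorem. You have simply made the final implication explicit---that $i^*(J_6)$ and $i^*(J_{12})$ generate $\C[c,g]$, hence by injectivity $J_6,J_{12}$ freely generate $R(\noc)$---which is exactly the intended argument.
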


\begin{table}\caption{Un-normalized generators}\label{un-normalized}\begin{tabular}{|l@{ }p{1mm}l|l@{ }p{1mm}l|}

\hline
$w_1$&=& $x^2\wedge xy\wedge xz$           &
                                      $  w^*_{10}$ &=&$x^2\wedge xy\wedge y^2$
\\  \hline
$w_2 =E_{21}w_1$&=&$-x^2\wedge xz \wedge y^2+ x^2\wedge xy\wedge yz$   &

                                      $w^*_9 =-E_{32}w^*_{10} $ &=& $-x^2\wedge xz\wedge y^2-2x^2\wedge xy\wedge yz $
\\  \hline
$w_3 =E_{31}w_1$&=&$x^2\wedge  yz\wedge xz+ x^2\wedge xy\wedge z^2$&
                                      $w^*_8 =-\frac 12 E_{32}w^*_9$&=&$2x^2\wedge xz\wedge yz+ x^2\wedge xy\wedge z^2$
\\  \hline
$w_4 =E_{21}w_2$&=& $2xy\wedge y^2\wedge xz+ 2x^2\wedge y^2\wedge yz$&
                                      $  w^*_7 =-\frac 13 E_{32}w^*_8$&=&$-x^2 \wedge  xz\wedge z^2$
\\  \hline
$w_5 =E_{31}w_2$&=&$x^2\wedge y^2\wedge z^2+2xz\wedge xy\wedge yz$&
                                      $w^*_6 =-E_{31}w^*_{10}$&=&$2xy\wedge xz\wedge y^2+x^2\wedge y^2\wedge yz$
\\  \hline
$w_6=E_{31}w_3$&=&$2x^2\wedge yz\wedge z^2+ 2xz\wedge xy\wedge z^2$&
                                      $w^*_5=-E_{31}w^*_9$&=&$-4xy\wedge xz\wedge yz-x^2\wedge y^2\wedge z^2$
\\  \hline
$w_7 =E_{21}w_4$&=&$6xy \wedge  y^2\wedge yz$                  &
                                      $w^*_4 =-E_{21}w^*_7$&=&$2xy\wedge xz\wedge z^2+x^2\wedge yz\wedge z^2$
\\  \hline
$w_8=E_{31}w_4$&=&$2xy\wedge y^2\wedge z^2+2xz\wedge y^2\wedge yz$&
                                      $w^*_3=-E_{31}w^*_6$&=&$2xy\wedge y^2\wedge z^2-4xz\wedge y^2\wedge yz$
\\  \hline
$w_9=E_{31}w_5$&=&$2xy\wedge yz\wedge z^2+2xz\wedge y^2\wedge z^2$&
                                      $w^*_2=-E_{21}w^*_4$&=&$2xz\wedge y^2\wedge z^2-4 xy\wedge yz\wedge z^2$
\\  \hline
$w_{10} =E_{31}w_6$&=& $6xz\wedge  yz\wedge z^2$&
                                      $  w^*_1 =-E_{21}w^*_2$&=& $6y^2\wedge yz\wedge z^2$
\\  \hline
\end{tabular}\end{table}

\subsection{A geometric interpretation of the splitting of $\pl$}
Following  C.T.C. Wall \cite{wall:nocinv} we can interpret the projection maps from $\pl$ to its irreducible factors.

\subsubsection{The Jacobi map}A net $\varphi$ is a linear map from $S^2U$ to $V$, alternatively a quadratic map from $U$ to $V$. Its derivative at $u\in U$ is a linear map $d_u\varphi:T_u U\to T_u V$. Since tangent spaces of a vector space can be canonically identified with the vector space itself and $d_u\varphi$ is linear in $u$, the derivative $d\varphi$ defines a linear map from $U$ to $\Hom(U,V)$. We also have the degree 3 determinant map
  \[\det:\Hom(U,V)\ \stackrel{\bigw^3}\to\ \Hom(\bigw^3U,\bigw^3V)\iso \bigw^3 U^*\otimes \bigw^3V,\]
which can be composed with $d\varphi$ to obtain a degree 3 map $\jac(\varphi)$ from $U$ to $\bigw^3 U^*\otimes \bigw^3V$. We can also consider $\jac$ as a map
\[  \jac: \noc \to S^3U^*\otimes \bigw^3 U^*\otimes \bigw^3V.\]
The map $\jac$ factors through the Pl\"ucker map providing a linear projection
\[ \pi_1:\pl\to S^3U^*\otimes \bigw^3 U^*\otimes \bigw^3V.\]
Picking a basis in $V^*$ we can identify $\jac$ with
the Jacobian covariant $\jac:\bigoplus^3S^2(U^*)\to S^3(U^*)$, which is a joint covariant of triples of conics defined as

\[ \jac(M):=\det \left(\begin{array}{ccc}\partial_{\xi} M_1 & \partial_{\xi}M_2 & \partial_{\xi}M_3 \\\partial_{\eta}M_1 & \partial_{\eta}M_2   & \partial_{\eta}M_3 \\\partial_{\nu}M_1 & \partial_{\nu}M_2 & \partial_{\nu}M_3\end{array}\right)  \]
(recall that  $\xi,\nu,\eta$ is our basis in $U^*$, and the $M_i$ are homogeneous quadratic polynomials in $\xi,\eta,\nu$). Now  $\jac$ is an alternating trilinear function in $M_1,M_2,M_3$, hence it factors through an $\SL(U)$-equivariant linear map
\[\pi_1:\pl\to S^3(U^*).\]
 It maps the basis vector $e_{ijk}\in \pl$ to the $3\times 3$ minor corresponding to the $i,j,k$ columns of the matrix
\[\left(
  \begin{array}{cccccc}
    2\xi & \eta & \nu & 0 & 0 & 0 \\
    0 & \xi & 0 & 2\eta & \nu & 0 \\
    0 & 0 & \xi & 0 & \eta & 2\nu \\
  \end{array}
\right)\]
(the columns contain the partial derivatives for each of   $\xi^2,\xi\eta,\xi\nu,\nu^2,\eta\nu,\nu^2$). Recall that $x^3,3x^2y$, $\dots$, $6xyz,\dots$ is the basis in $S^3(U)$ dual to the basis $\xi^3,\xi^2\eta,\dots,\xi\eta\nu,\dots$ of $S^3(U^*)$. Now $\pi_1^*$ embeds $S^3(U^*)^*\cong S^3(U)$ into $\pl^*$. For example, $\pi_1(e_{123})=2\xi^3$, and no other $\pi_1(e_{ijk})$ contains the monomial $\xi^3$.
This shows that $\pi_1^*(x^3)=2t_{123}=8w_1$ (where $w_1$ is the element given in Table~\ref{un-normalized}).
Similarly $\xi^2\eta$ is contained only in $\pi_1(e_{125})=2\xi^2\eta$ and $\pi_1(e_{134})=-4\xi^2\eta$.
It follows that  $\pi_1^*(3x^2y)=2t_{125}-4t_{134}=8w_2$. One checks in the same way that the basis
$x^3,3x^2y,\dots 6xyz,\dots,z^3$ of $S^3(U)$  is mapped under $\pi_1^*$ onto $8w_1,\dots,8w_{10}$ (cf. Table~\ref{un-normalized}).

\subsubsection{The dual Jacobi map}  We have a degree 2 map
\[\Hom(\C^2,U)\ \stackrel{S^2}\to\ \Hom(S^2\C^2,S^2U).  \]
Composing with a net $\varphi$ and choosing an element in $\Hom(\C^2,U)$ we get a linear map in $\Hom(S^2\C^2,V)$. Taking its determinant we get a degree 6 map from $U\oplus U\iso\Hom(\C^2,U)$ to the one-dimensional vector space $L=\Hom(\bigwedge^3S^2\C^2,\bigwedge^3V)\iso \bigwedge^3V$. (As a representation of $\GL(U)$ the line $L$ is isomorphic to the trivial one-dimensional representation.) Notice that this map factors through $\bigwedge^2 U$, providing a degree 3 map from $\bigwedge^2 U$ to $L$. Varying $\varphi$ we end up with a degree 3 map from $\noc \to S^3\bigwedge^2 U^*\otimes L$ which factors through the Pl\"ucker map $\psi$. Now notice that $\bigwedge^2 U^*\iso U\otimes \bigw^3 U^*$. Hence we defined a linear map
\[ \pi_2:\pl\to S^3U\otimes \left(\bigw^3 U^*\right)^3\otimes \bigw^3 V. \]
Notice that the $\GL(V)$-action played no active role in the projections $\pi_1$ and $\pi_2$ as it was expected from the abstract splitting of the representation $\bigw^3 S^2U^*$.

As an $\SL(U)$-equivariant linear map
$\pi_2:\pl\to S^3(U)$ can be constructed as follows: Picking a basis in $V^*$ we can identify a net with a triple $M=(M_1,M_2,M_3)$ where $M_i\in S^2U^*$. We may think of $S^3(U)$ as the space of cubic polynomial functions on $U^*$. Now $\pi_2(M_1\wedge M_2\wedge M_3)$ vanishes on a linear form $f\in U^*$ if the net $M$ restricted to the zero locus of $f$ does not have full rank.

More explicitly, eliminate the variable $\nu$ from the ternary quadratic forms  $M_i$ using the relation $x\xi+y\eta+z\nu=0$;  we obtain three binary quadratic forms in the variables $\xi,\eta$. Now $\pi_2(M_1\wedge M_2\wedge M_3)$ is  the determinant of the $3\times 3$ matrix whose columns contain the coefficients of these three binary quadratic forms. In particular, $\pi_2(e_{ijk})$ is the $(i,j,k)$ minor of
\[ \left(\begin{array}{cccccc}z & 0 & -x & 0 & 0 & x^2/z \\0 & 0 & 0 & z & -y & y^2/z \\0 & z & -y & 0 & -x & 2xy/z\end{array} \right) \]
(showing also that we end up with a cubic polynomial in $x,y,z$).
The dual $\pi_2^*$ embeds $S^3U^*$ into $\pl^*$, and in the same way as in the case of the Jacobi map one may check that the basis vectors $\xi^3,\xi^2\eta,\dots,\nu^3$ are mapped to  $\frac{-1}{3}w_1^*,\dots,\frac{-1}{3}w_{10}^*$ from Table~\ref{un-normalized}.

\subsection{Stability} \label{sec:stability}
A net of conics is in the nullcone if both $J_6$ and $J_{12}$ are zero on it. The geometric quotient of $\noc$ is $\P^1$ and the quotient map on the complement of the nullcone is given by $k:=J_6^2/J_{12}$. An orbit $\eta$ is strictly semistable if $k^{-1}(k(\eta)) \supsetneqq \eta$. We can use formula (\ref{eq:i2}) to calculate $J_6$ and the explicit form of the degree 4 invariant of the plane cubics to calculate $J_{12}$. Notice that Theorem \ref{thm:cubics-invs} is not sufficient, since non-smooth cubics do not admit a Weierstrass form. Nevertheless these are simple calculations which show that the only codimension$>1$ orbits outside the nullcone are $D,D^*,E,E^*$ with
\[  \begin{array}{cccc}
          & J_6 & J_{12} & k \\
       D   & 1 & 1 & 1 \\
       D^* & -8 & 16 & 4 \\
       E   & 1 & 1 & 1 \\
       E^* & -8 & 16 & 4 \\
     \end{array}  \]

 Since $k$ is a bijection on the codimension 1 orbits it is enough to find values of $c$ and $g$ with the given $k$. Since $k(\nu_{c,g})=\frac{12g^2}{3g^2-c}$, it is immediate that $k(B)=1$ for $B:=\nu_{-9,1}$ and $k(B^*)=4$ for $B^*:=\nu_{0,1}$ ($B$ and $B^*$ are notations from \cite{wall:nets}). Consequently the complete list of semistable orbits are $B,B^*,D,D^*,E,E^*$.

\subsubsection{The discriminant} For the representations $\noc$ and $S^3\C^3$ the nonstable variety is a hypersurface and we call their defining equation the {\em discriminant} of the representation. It is a classical result that the discriminant of the plane cubics is $\Delta=4a^3+27b^2$. Using the $c$-$g$-form we can quickly check that $-2^83^3\delta^*\Delta=(J_6^2-J_{12})^2(J_6^2-4J_{12})$ (see Section \ref{sec:gitmap} for the details), consequently a net is unstable if and only if its determinant cubic is unstable, but the $\delta^*$-image of the discriminant is not the discriminant: the component $(J_6^2-J_{12})=\overline B$ is counted with multiplicity~2.

\section{Hierarchy of the nets of conics} \label{sec:hierarchy}

C. T. C. Wall's result on the classification of the $\noc$-orbits can be verified using the results of Sections \ref{sec:class} and \ref{sec:inv-of-noc}. The codimension 1 orbits are classified by their $k$-invariant. The fact stated in Theorem \ref{thm:nets_equiv_classes}, namely that the restriction equations determine the equivariant classes imply that no orbit is missed in Table \ref{symmetries}. Any missing orbit would cause an indeterminacy in the solution of the restriction equations, hence would contradict to Theorem \ref{thm:nets_equiv_classes}.

To determine the hierarchy we use that a cohomologically defined incidence class determines adjacency for {\em positive} orbits: Consider a Lie group $G$ acting on a vector space $V$ complex linearly. For $v\in V$ let $T_v$ denote the maximal torus of the stabilizer subgroup of $G$.

\begin{definition}The orbit $Gv$ is positive if there is a linear functional $\varphi$ on the weight space of $T_v$ such that for all weights $w_i$ of the $T_v$-action on the normal space of the orbit $Gv$ at $v$ we have $\varphi(w_i)>0$.
\end{definition}

\begin{theorem} \cite{incidence} \label{thm:incidence} Let $\eta\subset V$ be a $G$-invariant subvariety and suppose that the orbit $Gv$ is positive for some $v\in V$. Then $v\in \eta$ if and only if $[\eta\subset V]_{T_v}\neq 0$.\end{theorem}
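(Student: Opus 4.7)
The plan is to reduce the statement to the normal slice via the Incidence Theorem (Theorem~\ref{thm:inc=tp}), and then exploit the contracting one-parameter subgroup of $T_v$ supplied by positivity. By Theorem~\ref{thm:inc=tp} applied with $S=T_v$, one has
\[  [\eta\subset V]_{T_v}\;=\;[(\eta\cap N_v)\subset N_v]_{T_v}, \]
where $N_v$ is a $T_v$-invariant linear normal slice to $Gv$ at $v$. Thus the problem becomes a statement about the $T_v$-equivariant class of the closed $T_v$-invariant subvariety $\eta\cap N_v$ inside the $T_v$-representation $N_v$ with weights $w_1,\dots,w_k$. Positivity supplies a linear functional $\varphi$ with $\varphi(w_i)>0$ for all $i$; choose a one-parameter subgroup $\lambda\colon\mathbb{C}^*\to T_v$ pairing positively with every $w_i$. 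Then $\lambda$ acts on $N_v$ with strictly positive weights, so $\lim_{t\to 0}\lambda(t)\cdot n=0$ for every $n\in N_v$.

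For the reverse implication, suppose $v\notin\eta$. Since $\eta$ is closed, $\eta\cap N_v$ is closed and $T_v$-invariant, but does not contain the origin $0=v$ of $N_v$. If $\eta\cap N_v$ contained any point $y$, then $\lambda(t)\cdot y$ would remain in the closed set $\eta\cap N_v$ for all $t$, and its limit as $t\to 0$ would force $0\in\eta\cap N_v$, contradicting $v\notin\eta$. Hence $\eta\cap N_v=\emptyset$ and the equivariant class vanishes.

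For the forward implication, suppose $v\in\eta$. Then $\eta\cap N_v$ is a non-empty closed $T_v$-invariant subvariety of $N_v$ containing the origin, and the task is to show that its equivariant class in $H^*_{T_v}(\mathrm{pt})$ is non-zero. This is the general positivity principle for equivariant multidegrees: for a non-zero finitely generated module $M$ over $\mathbb{C}[N_v]$ graded by a torus acting with all weights in an open half-space, the $T$-equivariant multidegree (equivalently, the numerator of the multi-graded Hilbert series in its canonical form) is a non-zero polynomial with non-negative coefficients in the weights. I would apply this to $M=\mathbb{C}[N_v]/I(\eta\cap N_v)$. A hands-on route is to restrict along $\lambda$ itself: the resulting single-variable polynomial $[\eta\cap N_v\subset N_v]_{\mathbb{C}^*}(t)$ has as its leading term the multiplicity of $\eta\cap N_v$ at $0$ times $t^{\mathrm{codim}(\eta\cap N_v,\,N_v)}$, which is visibly non-zero, and a non-zero specialisation of $[\eta\cap N_v\subset N_v]_{T_v}$ forces the class itself to be non-zero.

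The main obstacle is precisely this last non-vanishing step. The contraction argument for the converse direction is elementary and purely topological, but the forward direction genuinely requires either a Hilbert-series / multidegree calculation or, equivalently, the $\lambda$-restriction argument combined with the standard fact that the leading coefficient of the $\mathbb{C}^*$-equivariant class of a non-empty positively graded cone is its geometric multiplicity at the origin. Once that principle is in hand, the rest of the proof is bookkeeping around Theorem~\ref{thm:inc=tp}.
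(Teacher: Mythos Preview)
The paper does not prove this theorem; it is quoted from \cite{incidence} and used as a black box, just as Theorem~\ref{thm:inc=tp} is. So there is no in-paper argument to compare against.

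That said, your outline is essentially the standard proof one finds in the literature and is sound. A couple of small clean-ups: in the forward direction the restricted class $[\eta\cap N_v\subset N_v]_{\mathbb C^*}$ is already homogeneous of degree $\operatorname{codim}(\eta\cap N_v,N_v)$, so speaking of its ``leading term'' is slightly misleading---what you want is simply that the single coefficient is a strictly positive integer, which follows, for instance, from a Gr\"obner degeneration of $\eta\cap N_v$ to a union of coordinate subspaces, each contributing a positive product of the (positive) $\lambda$-weights. Also, in the reverse direction you silently use that $\eta$ is closed; this is implicit in the paper's usage (the theorem is applied only to orbit closures), but is worth stating since an invariant locally closed $\eta$ not containing $v$ could still meet $N_v$ in a set whose closure contains the origin.
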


Table \ref{positivity} contains the description of normal slices to orbits, and their weights. The last column contains the values of the functional $\varphi$ (that is, the values $\varphi(\alpha), \varphi(\beta),\ldots$) if such a functional---proving the positivity of the given orbit---exists. By inspection we obtain the following fact.

\begin{proposition} All unstable orbits of $\noc$ are positive.\end{proposition}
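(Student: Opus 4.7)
The plan is to reduce the claim to a finite case-check and then verify positivity orbit by orbit, reading off everything from Table~\ref{positivity}. First I would use the description of the nullcone in Section~\ref{sec:stability}: an orbit is unstable precisely when both $J_6$ and $J_{12}$ vanish on it. The explicit calculation of $k=J_6^2/J_{12}$ there shows that among the codimension$>1$ orbits only $D,D^*,E,E^*$ are semistable, and among the codimension~$1$ family $A_\mu$ every orbit is semistable (as $J_6$ or $J_{12}$ is nonzero on each of them). Consequently the unstable orbits are exactly the twenty remaining orbits listed in Table~\ref{symmetries}, and the proposition becomes a statement to be checked for each of them individually.

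For each such orbit $\eta$, I would use the representative $v$ and stabilizer torus $T_v$ given in Table~\ref{symmetries}, compute a $T_v$-invariant complement $N_v$ to the orbit tangent space $T_v(Gv)$ inside $\noc$, and diagonalize the $T_v$-action on $N_v$. The resulting weights $w_1,\dots,w_k$ in the character lattice of $T_v$ are expressed as $\Z$-linear combinations of the free parameters $\alpha,\beta,\gamma,\dots$ appearing in Table~\ref{symmetries}; these are the data that populate Table~\ref{positivity}. To verify positivity one must then exhibit a linear functional $\varphi$ on the rational weight space of $T_v$ such that $\varphi(w_i)>0$ for every $i$. By a standard hyperplane separation theorem this is possible iff the $w_i$ all lie in an open half-space, equivalently iff $0$ is not a convex combination of them, so the check becomes an elementary convex-geometry question in a space whose dimension equals the rank of $T_v$ (at most six, attained only at the zero orbit).

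The main obstacle is that there is no a priori reason why $T_v$-positivity should hold for an unstable orbit: the Hilbert--Mumford $1$-parameter subgroup driving $v$ to $0$ necessarily lies outside $T_v$, because it does not fix $v$, so an instability certificate does not directly produce a candidate $\varphi$. Hence the check really must be performed case by case. Fortunately each individual verification reduces to choosing values for the few parameters $\alpha,\beta,\dots$ under which all normal weights come out positive; these choices are recorded in the last column of Table~\ref{positivity}, and the proof concludes by pointing to the resulting strictly positive values in each of the twenty cases.
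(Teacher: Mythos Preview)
Your proposal is correct and takes essentially the same approach as the paper: the paper's proof consists of the single phrase ``By inspection we obtain the following fact,'' referring to exactly the case-by-case check in Table~\ref{positivity} that you have spelled out in detail. The only minor remark is that Table~\ref{positivity} does not list the $0$ orbit explicitly, but as you could easily add, it is trivially positive (set all source parameters to $0$ and all target parameters to $1$).
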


\begin{table}\caption{Normal weights and positivity}\label{positivity}\begin{tabular}{|l|l|l|p{100mm}|l|}

\hline
$\Sigma^0$&&&normal weights&$\scriptstyle{\varphi(\alpha),\varphi(\beta),\dots}$\\
\hline
$C$         & 2 & $y^2+2xz,2yz,-x^2$           & $4\alpha-4\beta,2\alpha-2\beta$      & 1,0  \\
$D$         & 2 & $x^2, y^2, z^2+2xy$          & $3\alpha-3\beta,3\beta-3\alpha$      & -   \\
$D^*$       & 2 & $2xz, 2yz, z^2+2xy$          & $3\alpha-3\beta,3\beta-3\alpha$      & -   \\
$E$         & 3 & $x^2,y^2,z^2$                & $2\alpha-\beta-\gamma,2\beta-\alpha-\gamma,2\gamma-\alpha-\beta$              & - \\
$E^*$       & 3 & $2xy, 2yz, 2zx$              & $-2\alpha+\beta+\gamma,-2\beta+\alpha+\gamma,-2\gamma+\alpha+\beta$           &-\\
$F$         & 3 & $x^2+y^2, 2xy, 2yz$          & $2\alpha-2\beta,2\alpha-2\beta,\alpha-\beta$                                  & 1,0 \\
$F^*$       & 3 & $x^2+y^2,xz,z^2$             & $2\beta-2\alpha,2\beta-2\alpha,\beta-\alpha,$                                & -1,0 \\
$G$         & 4 & $x^2,y^2,yz$                 & $2\alpha-2\gamma,\beta-\alpha,2\beta-\alpha-\gamma,2\beta-2\gamma$            & 1,2,0 \\
$G^*$       & 4 & $xy, xz,z^2$                 & $2\gamma-2\alpha,\alpha-\beta,-2\beta+\alpha+\gamma,2\gamma-2\beta$           & -1,-2,0 \\
$H$         & 5 & $x^2,2xy,y^2+2xz$            & $2\alpha-2\beta,2\alpha-2\beta,3\alpha-3\beta,3\alpha-3\beta,4\alpha-4\beta$  & 1,0 \\
$I$         & 7 & $x^2,xy,y^2$                 & $\alpha-\gamma,2\alpha-\beta-\gamma,2\alpha-2\gamma,\beta-\gamma,-2\gamma+\alpha+\beta,
                                                     2\beta-2\gamma$                                                           & 0,0,-1 \\
$I^*$       & 7 & $xz,yz,z^2$                  & $-\alpha+\gamma,-2\alpha+\beta+\gamma,-2\alpha+2\gamma,-\beta+\gamma,2\gamma-\alpha-\beta,
                                                     -2\beta+2\gamma$                                                          & 0,0,1 \\
\hline
$\Sigma^1$&&&&\\
\hline
$(1^4)$  & 4 & $x^2-xz,y^2-yz,0$     & $\beta-2\gamma,\beta-2\gamma,\beta-2\gamma,\beta-2\gamma$                               & 1,0 \\
$(21^2)$  & 5 & $xy,xz+yz,0$          & $2\alpha-2\beta,\gamma-2\alpha,\gamma-2\alpha,\gamma-\alpha-\beta,\gamma-2\beta$        & 1,0,3 \\
$(31)$   & 6 & $xz,x^2-yz,0$         & $3\beta-3\alpha,2\beta-2\alpha,\gamma-3\alpha-\beta,\gamma-4\alpha,\gamma-2\alpha-2\beta,
                                         \gamma-4\beta$                                                                        & 0,1,5 \\
$(22)$   & 6 & $x^2,yz,0$            & $2\alpha-2\beta,2\alpha-2\gamma,\delta-\alpha-\beta,\delta-\alpha-\gamma,\delta-2\beta,\delta-2\gamma$
                                                                                                                               & 1,0,0,2 \\
$(4)$    & 7 & $xz+y^2,x^2,0$        & $2\alpha-2\beta,3\alpha-3\beta,4\alpha-4\beta,\gamma-3\alpha-\beta,\gamma-2\alpha-2\beta,
                                          \gamma-4\beta$                                                                       & 1,0,5 \\
$K$      & 8 & $y^2,z^2,0$           & $2\alpha-2\beta,2\alpha-\beta-\gamma,2\gamma-2\beta,2\gamma-\alpha-\beta,\delta-2\beta,$ &       \\
                                  &&&     $\delta-\alpha-\beta,\delta-\beta-\gamma,\delta-\alpha-\gamma$                       & 0,-1,0,0 \\
$L$      & 8 & $xy,xz,0$             & $\alpha-\beta,\alpha-\gamma,\alpha+\beta-2\gamma,\alpha+\gamma-2\beta,\delta-2\alpha,\delta-2\beta,$&\\
                                  &&&     $\delta-2\gamma,\delta-\beta-\gamma$                                   &1,0,0,3 \\
$M$      & 9 & $yz,y^2,0$           &$\beta+\gamma-2\alpha,\beta-\alpha,2\beta-2\alpha,2\beta-\alpha-\gamma,2\beta-2\gamma,\delta-2\alpha,$&\\
                                  &&&     $\delta-\alpha-\beta,\delta-\alpha-\gamma,\delta-2\gamma$                            & 0,1,0,2 \\
\hline
$\Sigma^2$&&&&\\
\hline
$S$      & 10&  $xy-z^2, 0,0$        & $\gamma-2\alpha-2\beta,\gamma-3\alpha-\beta,\gamma-4\alpha,\gamma-\alpha-3\beta,\gamma-4\beta,$&\\
                                  &&&  $\delta-2\alpha-2\beta,\delta-3\alpha-\beta,\delta-4\alpha,\delta-\alpha-3\beta,\delta-4\beta  $
                                                                                                                              & 0,0,1,1 \\
$PL$     & 11 & $xy,0,0$             & $\delta-2\alpha, \delta-2\beta,\delta-\alpha-\gamma,\delta-\beta-\gamma,\delta-2\gamma$     &  \\
         &    &                      & $\epsilon-2\alpha, \epsilon-2\beta,\epsilon-\alpha-\gamma,\epsilon-\beta-\gamma,\epsilon-2\gamma,\alpha+\beta-2\gamma$ & 1,0,0,3,3 \\
$DL$     & 13 & $x^2,0,0$            & $\delta-\alpha-\beta, \delta-2\beta,\delta-\alpha-\gamma,\delta-2\gamma,\delta-\beta-\gamma$  &  \\
         &    &                      & $\epsilon-\alpha-\beta, \epsilon-2\beta,\epsilon-\alpha-\gamma,\epsilon-2\gamma,\epsilon-\beta-\gamma$ & \\
         &    &                      & $2\alpha-2\beta, 2\alpha-\beta-\gamma, 2\alpha-2\gamma$ & 1,0,0,3,3\\
\hline

\end{tabular}\end{table}

Thus, Theorem \ref{thm:incidence} determines almost all adjacencies of the orbits, namely the ones involving unstable orbits. The missing adjacencies of the semistable orbits can be determined by calculating the $k$-invariant. As a result we obtain the complete hierarchy, depicted on Figure \ref{fig:hierarchy}.

\begin{example} \rm
Consider the orbits $F$ and $F^*$, and their adjacency with the orbit $(1^4)$. Let $v$ be the point in the $(1^4)$ orbit given in the Table, and let $j_{(1^4)}$ be the restriction homomorphism $H^*_{\GL(U)\times \GL(V)}(\noc)\to H^*_{T_v}(\noc)$. One can read from the table above that the homomorphism $j_{(1^4)}$ is the substitution
$$ c_i=\sigma_i(\alpha,\alpha,\alpha), \qquad d_i=\sigma_i(2\alpha,2\alpha,\beta),$$
where $\sigma_i$ denotes the $i$th elementary symmetric polynomial. For the equivariant classes given in Theorem \ref{thm:nets_equiv_classes} we have
$$j_{(1^4)} ([\overline{F}]) = 0, \qquad j_{(1^4)}([\overline{F^*}])=-6(2\alpha-\beta)(4\alpha^2-4\alpha\beta+\beta^2)\not=0.$$
Hence, we have that $(1^4)$ is contained in the orbit closure of $F^*$, but is not contained in the orbit closure of $F$.
\end{example}

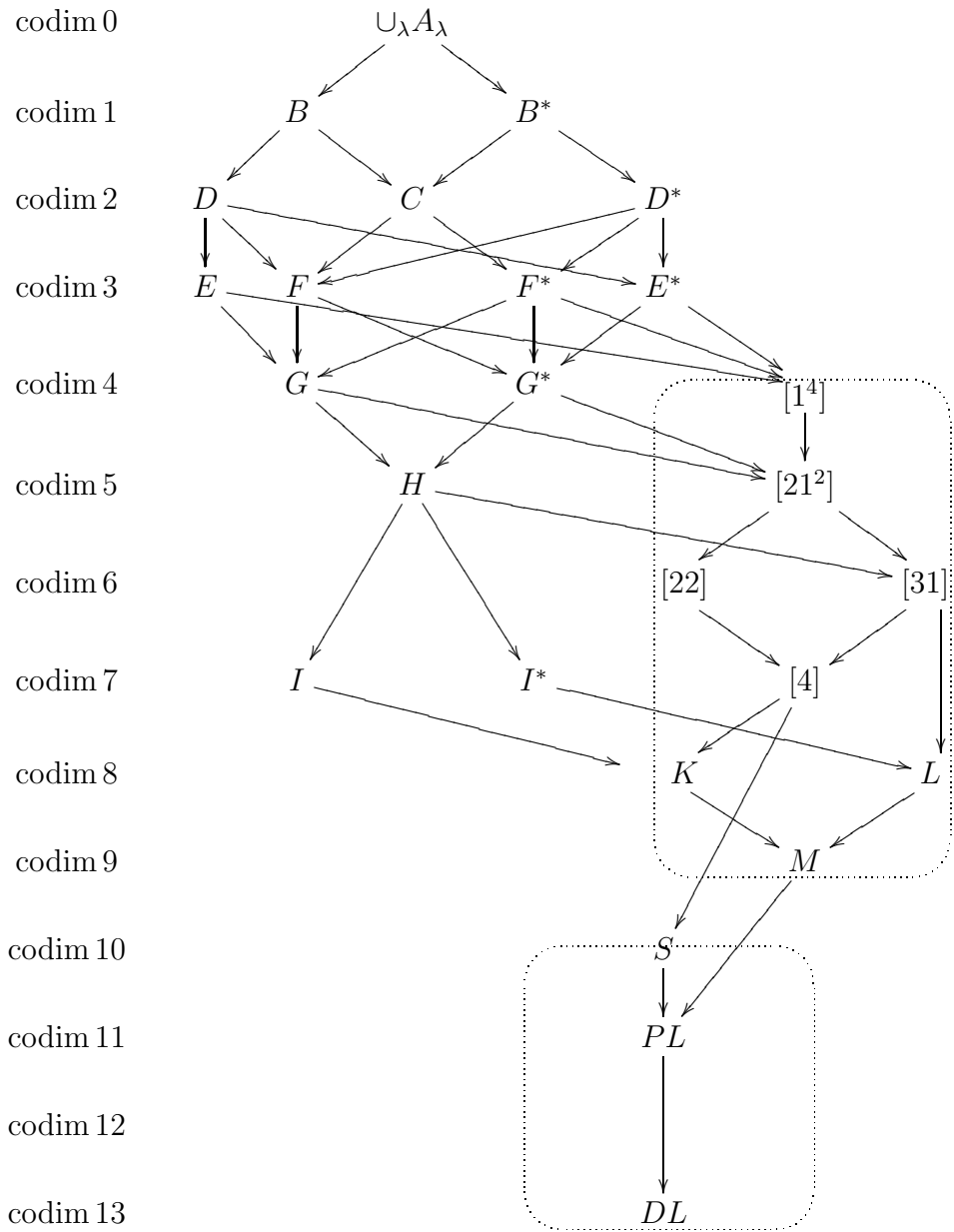
\begin{figure}
\xymatrix @=1.5pc{
\codim 0  &  & & \cup_\lambda A_{\lambda} \ar[dl] \ar[dr] & &         \\
\codim 1 &  & B \ar[dr]\ar[dl] &  & B^* \ar[dl]\ar[dr]        \\
\codim 2 & D\ar[d]\ar[dr]\ar[drrrr] & & C\ar[dl]\ar[dr] & & D^*\ar[dlll]\ar[d]\ar[dl]                    \\
\codim 3 & E \ar[dr] \ar[drrrrr] &  F \ar[drr]\ar[d]& &F^* \ar[dll]\ar[d]\ar[drr] & E^* \ar[dl]\ar[dr]                  \\
\codim 4 & & G\ar[dr]\ar[drrrr] & & G^*\ar[dl]\ar[drr] &
         & *\txt{\phantom{x}\\ $[1^4]$} \save [l].[dddddl].[dddddr].[r]*[F.:<15pt>]\frm{}="back" \restore \ar[d]     &             \\
\codim 5 & & & H \ar[ddr]\ar[ddl] \ar[drrrr] & & & [21^2]\ar[dr]\ar[dl]        &           \\
\codim 6 & & & & & \ \ \ \ [22]\ar[dr] & &  [31]\ \ \ \ar[dl] \ar[dd]          &          \\
\codim 7 & & I \ar[drrr]& & I^*\ar[drrr] & & [4] \ar[dl]     \ar[dddl]           &   \\
\codim 8 & & & & & \ \ \ \ K\ar[dr]  & &  L\ \ \ar[dl]           &          \\
\codim 9 & & & & &   & M \ar[ddl] &            &          \\
\codim 10 & & &  & & S \ar[d] \save [l].[dddl].[dddr].[r]*[F.:<15pt>]\frm{}="back" \restore &  &            &          \\
\codim 11 & & & & & PL \ar[dd] &  &            &          \\
\codim 12 & & & & & & & & \\
\codim 13 & & & & & DL  &  &            &
}
\caption{The hierarchy of $\noc$ orbits}\label{fig:hierarchy}
\end{figure}

\subsection{The equivariant cohomology rings of the orbits}\label{sudoku}
With a little extra sudoku type calculations one can determine the equivariant cohomology rings $H^*(B{G_x})$ of the orbits $Gx$. In column 5 of Table \ref{symmetries} we listed the degrees of a free generating set for these rings. This information can be used e.g. to define certain ``higher'' Thom polynomials. Since the equivariant cohomology spectral sequence of the codimension filtration (Kazarian spectral sequence, see \cite{kazass}, \cite[Sect.10]{cr}) degenerates, the Poincar\'e series of the rings $H^*(BG_x)$ shifted by the codimension add up to the Poincar\'e series of $H^*(BG)$. For the open stratum $O=\bigcup_{\mu\in \P^1} A_\mu$ we have $H^*_{G}(O)=H^*_{\gl_1}(\P^1)$ for the trivial $\gl_1$-action, so the  Poincar\'e series of $H^*_{G}(O)$ is $\frac{1+t}{1-t}$, and we get that
\[\frac{1+t}{1-t}+\frac{t^2}{(1-t)^2}+\frac{2t^2}{(1-t)(1-t^2)}+\ldots+
\frac{t^{18}}{(1-t)^2(1-t^2)^2(1-t^3)^2}=\frac1{(1-t)^2(1-t^2)^2(1-t^3)^2}.\]
Here the $t$-exponents of the numerator are the codimensions (column 2) and the $t$-exponents of the denominators are degrees of the generators of $H^*(BG_x)$ (column 5).

\section{Enumerative questions, multiplicities of the determinant map} \label{sec:mult}
Equivariant classes contain enumerative data in a compressed form. One of the simplest enumerative invariants is the degree. Using \cite[Section 6]{forms} we see that the degree of a $G$-invariant subvariety  $\eta\subset\noc$ can be obtained by substituting 1 to the Chern roots of $\GL(V)$ and 0 to the Chern roots of $\GL(U)$ in the polynomial $[\eta]\in H^*(B(\GL(U)\times \GL(V)))$. For example, we obtain
\[ \deg(\overline{C})=72,\qquad \deg(\overline{D})=36,\qquad \deg(\overline{D^*})=45.  \]
Consequently in a generic two-parameter linear family of nets of conics there are 72 of type $C$, 36 of type $D$ and 45 of type $D^*$, etc.

A more subtle enumerative invariant is the {\em intersection multiplicity}. Suppose that $f:X\to Y$ is a map of smooth varieties and $Z\subset Y$ a codimension $d$ subvariety. Suppose also that the preimage $f^{-1}Z$ is pure $d$-codimensional. Then we can assign positive integers $\mu_i$ to every component $Y_i$ of $f^{-1}Z$ called intersection multiplicities (for more details see \cite{fulton:intersection}). If $X=Y=\C$ and $Z=\{0\}$, then the intersection multiplicities are the usual multiplicities of the roots of $f$.

An important property of the intersection multiplicity (\cite[Sect. 7]{fulton:intersection}) is that if all components $Y_i$ of $f^{-1}Z$ are of codimension $d$, then
\begin{equation}\label{eq:multiplicity} f^*([X])=\sum \mu_i[Y_i].  \end{equation}
In general this equation does not determine the intersection multiplicities $\mu_i$. However (\ref{eq:multiplicity}) generalizes to the equivariant setting where there is more chance that the classes $[Y_i]$ will be linearly independent.

Consider the determinant map $\delta:\noc\to S^3 V$ studied in Section \ref{sec:determinant}. From the normal forms in Table \ref{symmetries} it is easy to calculate the image under the determinant map. E.g. for $C$ the normal form is $y^2+2xz,2yz,-x^2$ which means that the generic net in matrix form is
\[  \left(
  \begin{array}{ccc}
    -\kappa & \cdot & \lambda \\
    \cdot & \lambda & \mu \\
    \lambda & \mu & \cdot \\
  \end{array}
\right), \]
with determinant $\mu^2\kappa-\lambda^3$ corresponding to the cuspidal curve $\nu$. A table can be found in \cite{wall:nets}. For the readers' convenience we included the images of the determinant map in the last column of Table \ref{symmetries}. The notation tries to imitate the shape of the various classes of plane cubics, i. e.  $\theta$ is the orbit (closure) of conic intersected by line, $\Omega$ the conic and tangent line, $\mathsf{A}$ the three nonconcurrent lines, $\neq$ is the double line intersected by a third line, $\Xi$ is the triple line and $\Zhe$ is the three concurrent lines. The codimension 1 orbits will be treated in the next section.

The equivariant classes of the $\GL_3$-representation $S^3\C^3$ (ie. plane cubics) were calculated by B.~K\H om\H uves \cite{komuves:thesis}. The cases we need are
\begin{equation}\label{eq:balazs_classes}
 [\nu]=24e_1^2,\ [\theta]=18e_1^2+9e_2,\ [\Omega]=36e_1^3+18e_1e_2,\ [\mathsf{A}]=12e_1^3+6e_1e_2+27e_3,\  [\Zhe]=e_1[\mathsf{A}],
\end{equation}
where $e_i$ denote the $\gl_3$-Chern classes.

The map $\delta$ is $\GL_3\times \GL_3$-equivariant, if we replace the $\gl_3$-action on the plane cubics by the $\gl(U)\times\gl(V)$-action as in Section \ref{sec:determinant}. The effect of this change on (\ref{eq:balazs_classes}) is replacing the $\gl_3$-Chern roots $\epsilon_i$ ($\epsilon_1+\epsilon_2+\epsilon_3=e_1$ etc) by some linear combination of the $\gl(U)\times\gl(V)$-Chern roots. A comparison of the actions of the maximal tori of $\GL_3$ and $\GL(U)\times GL(V)$ gives the substitution $\epsilon_i\mapsto \delta_i-2/3u_1$ (where $\delta_i$ denote the $\gl(V)$-Chern roots). Consequently for the Chern classes we obtain the substitution $e_1\mapsto v_1-2u_1$, $e_2\mapsto v_2-4/3u_1v_1+4/3u_1^2$ and $e_3\mapsto v_3-2/3v_2u_1+4/9v_1u_1^2-8/27u_1^3$.

After this substitution we have all the ingredients of (\ref{eq:multiplicity}) for the $\GL(U)\times \GL(V)$-equivariant map $\delta$. Then explicit calculation implies the following theorem.

\begin{theorem}\label{multiplicities} In $\GL(U)\times \GL(V)$-equivariant cohomology we have
   \[   \delta^*([\nu])=3[C],  \]
   \[   \delta^*([\theta])=4[D]+[D^*],  \]
   \[   \delta^*([\Omega])=9[F]+6[F^*],  \]
   \[   \delta^*([\mathsf{A}])=8[E]+[E^*]+2[F],  \]
and the coefficients on the right hand side are uniquely determined, therefore they are the intersection multiplicities.
\end{theorem}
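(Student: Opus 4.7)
The plan is to use equation (\ref{eq:multiplicity}) together with the explicit formulas for the equivariant classes of plane cubic orbit closures (\ref{eq:balazs_classes}), the equivariant classes of the net orbit closures from Theorem \ref{thm:nets_equiv_classes}, and the substitution $e_1\mapsto v_1-2u_1$, $e_2\mapsto v_2-\tfrac43 u_1v_1+\tfrac43 u_1^2$, $e_3\mapsto v_3-\tfrac23 v_2u_1+\tfrac49 v_1u_1^2-\tfrac{8}{27}u_1^3$ described in the paragraph preceding the theorem. For each of $Z\in\{\overline\nu,\overline\theta,\overline\Omega,\overline{\mathsf A}\}$, I would first compute $\delta^*[Z]$ as an explicit polynomial in $u_i,v_j$, and then identify which orbit closures $\overline{Y_i}\subset\noc$ can occur as components of $\delta^{-1}(Z)$ of the correct codimension.

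To identify the components I would use the last column $\delta$ of Table \ref{symmetries} (and the hierarchy of plane cubics, well known from classical invariant theory). Concretely, $\nu$, $\theta$ both have codimension $2$ in $S^3V$ and $\mathsf A$, $\Omega$ both have codimension $3$; since both pairs consist of incomparable strata, the pure-codimension components of $\delta^{-1}(Z)$ can only be those $\overline\eta\subset\noc$ whose generic element already lands in $Z$ and whose codimension equals that of $Z$. This immediately restricts the possibilities to $\overline C$ (for $\overline\nu$), $\overline D,\overline{D^*}$ (for $\overline\theta$), $\overline F,\overline{F^*}$ (for $\overline\Omega$), and $\overline E,\overline{E^*},\overline F$ (for $\overline{\mathsf A}$, noting that $F\mapsto\neq\subset\overline{\mathsf A}$ because $\neq$ has strictly larger codimension than $\mathsf A$). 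One must also verify that no higher-codimension orbit contributes a component of the correct codimension, which follows again from the codimension bookkeeping.

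Having fixed the list of possible components, I would write $\delta^*[Z]=\sum\mu_i[\overline{Y_i}]$ with unknown non-negative integers $\mu_i$ and solve the resulting linear system by comparing coefficients in $\Z[u_1,u_2,u_3,v_1,v_2,v_3]$. For example, for $\overline\theta$ one gets $\delta^*[\theta]=18(v_1-2u_1)^2+9(v_2-\tfrac43 u_1v_1+\tfrac43 u_1^2)=9v_2-84u_1v_1+18v_1^2+84u_1^2$, and matching against the expressions for $[D],[D^*]$ of Theorem \ref{thm:nets_equiv_classes} gives the unique solution $\mu_D=4$, $\mu_{D^*}=1$. The remaining three identities are analogous, and in each case the uniqueness of the $\mu_i$ reduces to the $\Z$-linear independence of the listed classes $[\overline{Y_i}]$ in the appropriate graded piece of $\Z[u_1,u_2,u_3,v_1,v_2,v_3]$, which is visible from their explicit forms in Theorem \ref{thm:nets_equiv_classes}.

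The main obstacle, in my view, is not the algebraic verification (which is mechanical once the lists are fixed) but the completeness of the candidate lists of components, i.e.\ showing that no further orbit closure appears on the right-hand side. This relies on carefully combining the $\delta$-column of Table \ref{symmetries} with the plane-cubic hierarchy and the codimension data; any orbit $\overline\eta$ whose generic element maps into $Z$ but which has codimension strictly greater than $\mathrm{codim}\,Z$ in $\noc$ must be excluded, as must any orbit mapping into an incomparable stratum of the same codimension as $Z$. Once this bookkeeping is in place, the rest is a short computation.
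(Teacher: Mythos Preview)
Your proposal is correct and follows essentially the same approach as the paper: compute $\delta^*[Z]$ via the indicated substitution on the classes (\ref{eq:balazs_classes}), then match against the explicit classes of Theorem~\ref{thm:nets_equiv_classes} and observe that the solution is unique. The paper's own argument is in fact terser than yours---it simply says that after the substitution ``explicit calculation implies'' the theorem---so your careful bookkeeping of which orbit closures can appear as components of $\delta^{-1}(\overline Z)$ (using the $\delta$-column of Table~\ref{symmetries} and the plane-cubic hierarchy) makes explicit a step the paper leaves to the reader.
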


For the three concurrent lines the codimension condition is {\em not} satisfied, but a similar calculation gives the equality
   \[   \delta^*([\Zhe])=12[(1^4)]+\big(4[G]+1/2[G^*]+2(d_1-2c_1)[F]\big),  \]
and, like above, the coefficients are uniquely determined. The class in the bracket is supported on $\overline{F}$, so we can call 12 the intersection multiplicity of $(1^4)$.

\begin{remark}\label{alg-mult}
The intersection multiplicities are always at most the algebraic multiplicities by \cite[Pr. 8.2]{fulton:intersection} and they agree if the image is (locally) Cohen-Macaulay and the preimage has the same codimension by \cite[p.108]{fulton-pragacz}. In \cite{chipalkatti} J. Chipalkatti  determines which  orbit closures are arithmetically Cohen-Macaulay for the plane cubics. Among the orbits of Theorem \ref{multiplicities} $\nu$ is Cohen-Macaulay, since it is a complete intersection, $\Omega$  is arithmetically Cohen-Macaulay consequently Cohen-Macaulay, but $\theta$ and $\mathsf{A}$ are not arithmetically Cohen-Macaulay. Therefore the intersection multiplicities for $\nu$ and $\Omega$ are algebraic multiplicities as well. For $\theta$ and $\mathsf{A}$ we do not know if they are Cohen-Macaulay. If  the algebraic multiplicities for $\theta$ and $\mathsf{A}$ differ from the intersection multiplicities, then they cannot be Cohen-Macaulay. Unfortunately we were not able to calculate these algebraic multiplicities.
\end{remark}

For the codimension 1 orbits we study the induced map on the GIT quotients.

\subsection{The induced map on the GIT quotients  of  nets of conics to plane cubics} \label{sec:gitmap}
To see that the $G=\GL(U)\times \GL(V)$-equivariant determinant map $\delta$ induces a map  $d$ of the corresponding GIT quotients we need to check that semistable orbits are mapped to semistable orbits. It follows from general principles but here is a direct verification. For codimension $>1$ orbits of $\noc$ we see this fact from Table \ref{symmetries}. (For the plane cubics the semistable orbits are the smooth curves which are also stable, together with the nodal curve and $\theta$ and $\mathsf{A}$.) The codimension 1 orbits $A_\mu$ have a $\nu_{c,g}$ representative with $(c,g)\neq(0,0)$, so for $\delta(\nu_{c,g})$ either $a=c-3g^2$ or $b=2g(c+g^2)$ is not zero, therefore the image is semistable.

The traditional parametrization of the GIT quotient $S^3\C^3//G$ is the $j$-invariant $j=\frac{4a^3}{\Delta}: S^3\C^3//G\to \P^1$, where $\Delta=4a^3+27b^2$ is the discriminant. On the other hand we saw in Section~\ref{sec:stability} that the invariant $k=\frac{J_6^2}{J_{12}}:\noc//G\to \P^1$ parametrizes the GIT quotient $\noc//G$. Using these parametrizations we can calculate $\tilde d:=j\circ d\circ k^{-1}:  \P^1\to \P^1$:

Using the $c$-$g$-form we get (by some abuse of notation)
\[ \Delta=\Delta(\nu_{c,g})=4a^3+27b^2=4a^3+27(4g^2)(a+4g^2)^2=4(a+3g^2)(a+12g^2)^2, \]
and we have $J_6=24g$ and $J_{12}=-48a$, so $48^3\Delta=(-4J_{12}+J_6^2)(-J_{12}+J_6^2)^2$. Therefore $j\circ d=\frac{J_{12}^3}{(J_6^2-4J_{12})(J_6^2-J_{12})^2}$ and
   \[ \tilde{d}(x:y)=(4y^3:(x-4y)(x-y)^2). \]

 The branching points are at the singular points (take the affine chart $y=1$, and find the zeroes of $3(x-1)(x-3)$---the derivative of $(x-4)(x-1)^2$---and an extra branching at $x=\infty$.):
\begin{itemize}
\item[$x=1$]  ($\Delta=0$ i.e $j$-invariant is $\infty$) corresponds to the semistable point (class of the nodal curve). It has preimages $B$ with $k(B)=1$ and $B^*$ with $k(B^*)=4$. The point $B$ has multiplicity two. Notice that $B$ and $B^*$ represent the two semistable points in $\noc//G$ (in the GIT quotient $B\sim D\sim E$ and $B^*\sim D^*\sim E^*$).
\item[$x=\infty$]  ($j$-invariant is $0$) corresponds to the degree 4 orbit of elliptic curves defined by $a=0$. It has one preimage (of multiplicity 3): the orbit of nets of conics defined by $J_{12}=0$.
\item[$x=3$]  ($j$-invariant is $1$) corresponds to the degree 6 orbit of elliptic curves defined by $b=0$. It has one preimage with $k=3$ and another one with multiplicity 2, the degree 6 orbit of nets of conics defined by $J_6=0$.
\end{itemize}

\begin{remark}\label{hyper-mult}  For hypersurfaces the intersection multiplicities agree with the algebraic (or scheme theoretic) multiplicities. To determine these multiplicities in our case it is enough to compare degrees. One obtains that all algebraic multiplicities are 1, except for $B$ which has multiplicity~2.
\end{remark}

\bibliography{noc}
\bibliographystyle{alpha}

\end{document}